\newcommand{\varrg}{(M, g)}
\newcommand{\erre}{\mathds{R}}
\newcommand{\enne}{\mathds{N}}
\newcommand{\cinf}{C^{\infty}(M)}
\newcommand{\ricc}{\operatorname{Ric}}
\newcommand{\diver}{\operatorname{div}}
\newcommand{\hess}{\operatorname{Hess}}
\newcommand{\vol}{\operatorname{vol}}
\newcommand{\ra}{\rightarrow}
\newcommand{\set}[1]{{\left\{#1\right\}}}               
\newcommand{\pa}[1]{{\left(#1\right)}}                  
\newcommand{\sq}[1]{{\left[#1\right]}}                  
\newcommand{\abs}[1]{{\left|#1\right|}}                 
\newcommand{\eps}{\varepsilon}                           
\newtheorem{theorem}{\textbf{Theorem}}[section]
\newtheorem{lemma}[theorem]{\textbf{Lemma}}
\newtheorem{proposition}[theorem]{\textbf{Proposition}}
\newtheorem{cor}[theorem]{\textbf{Corollary}}
\newtheorem{defi}[theorem]{\textbf{Definition}}
\theoremstyle{remark}
\newtheorem{rem}[theorem]{\textbf{Remark}}
\numberwithin{equation}{section}
\title[CGSRS]
{Analytic and geometric properties of generic Ricci solitons}
\date{\today} \linespread{1.2}
\keywords{Ricci solitons, Omori-Yau maximum principle, Rigidity results}
\subjclass[2010]{53C20, 53C25.}
\begin{document}

\maketitle

\begin{center}
\textsc{\textmd{G. Catino\footnote{Politecnico di Milano, Italy.
Email: giovanni.catino@polimi.it. Supported
by GNAMPA projects ``Equazioni differenziali con invarianze in
analisi globale'' and ``Equazioni di evoluzione geometriche e strutture di tipo Einstein''.}, P.
Mastrolia\footnote{Universit\`{a} degli Studi di Milano, Italy.
Email: paolo.mastrolia@gmail.com. Partially supported by FSE,
Regione Lombardia.}, D. D. Monticelli\footnote{Universit\`{a} degli
Studi di Milano, Italy. Email: dario.monticelli@gmail.com. Supported
by GNAMPA projects ``Equazioni differenziali con invarianze in
analisi globale'' and ``Analisi Globale ed Operatori Degeneri''.} and M.
Rigoli\footnote{Universit\`{a} degli Studi di Milano, Italy. Email:
marco.rigoli@unimi.it. \\ The first, the second and the third authors are members of the Gruppo Nazionale per
l'Analisi Matematica, la Probabilit\`{a} e le loro Applicazioni (GNAMPA)
of the Istituto Nazionale di Alta Matematica (INdAM).
}, }}
\end{center}
\begin{abstract}
The aim of this paper is to prove some classification results for generic shrinking Ricci solitons. In particular, we show that every three dimensional generic shrinking Ricci soliton is given by quotients of either $\mathds{S}^3$, $\erre\times\mathds{S}^2$  or $\erre^3$, under some very weak conditions on the vector field $X$ generating the soliton structure. In doing so we introduce analytical tools that could be useful in other settings; for instance we prove that the Omori-Yau maximum principle holds for the $X$-Laplacian on every generic Ricci soliton, without any assumption on $X$.
\end{abstract}

\section{Introduction}\label{sec1}

The fundamental problem of capturing the topological properties of a manifold by its metric structure opened, in the last decades, extremely fruitful areas of
mathematics. From this perspective, there has been an increasing interest in the study of Riemannian manifolds endowed with metrics satisfying special structural
equations, possibly involving curvatures and vector fields. One of the most important example is represented by Ricci solitons, that have become the subject of a rapidly increasing investigation since the appearance of the seminal works of R. Hamilton, \cite{hamilton}, and G. Perelman, \cite{perelman}.
We  recall that if $\varrg$ is a $m$--dimensional, connected,
Riemannian manifold with metric $g$, a \emph{soliton structure}  $(M,
g, X)$ on $M$ is the choice (if any) of a smooth
vector field $X$ on $M$ and a constant $\lambda\in\erre$ such that
\begin{equation}\label{1}
\ricc+\frac{1}{2}\mathcal{L}_Xg =\lambda g,
\end{equation}
where $\ricc$ denotes the Ricci tensor of the metric
$g$ and $\mathcal{L}_Xg$ is the
Lie derivative of the metric in the direction of $X$: the constant
$\lambda$ is sometimes called the soliton constant. The soliton is
 expanding, steady or shrinking if, respectively, $\lambda<0$,
$\lambda=0$ or $\lambda>0$.
If $X$ is the gradient of a potential
$f\in\cinf$ the soliton is called a \emph{gradient Ricci soliton} and
\eqref{1} becomes
\begin{equation}\label{2}
\ricc+\hess(f)=\lambda g.
\end{equation}
In this case, using the symmetry of the tensor
$\hess(f)$, \eqref{2} and the second Bianchi
identity, one proves the validity of the fundamental equation
\begin{equation}\label{3}
\frac{1}{2}\nabla S=\ricc(\nabla f,\,)^\sharp,
\end{equation}
where $S$ is the scalar curvature and ${}^\sharp:T^*M\rightarrow TM$ is
the musical isomorphism. Equation \eqref{3}, together with Hamilton
identity (see e.g. \cite{hamilton})
\begin{equation}\label{1.4Hamilton}
S+\abs{\nabla f}^2-2\lambda f = C, \quad C \in \erre,
\end{equation}
is responsible for a number of basic properties related to the geometry of gradient solitons.
 For instance, from \eqref{1.4Hamilton} one deduces an upper bound on the growth of the weighted volume $\vol_f\pa{B_r} = \int_{B_r}e^{-f}\,d\mu$ of geodesic balls in $M$ (see \cite{caozhou}).
 For generic Ricci solitons $(M, g, X)$, that is,  when $X$ is not the
gradient of a potential, neither \eqref{3} nor Hamilton's identity \eqref{1.4Hamilton} are available. In case of \eqref{3} this is
technically due to the fact that, in the generic setting, the
symmetry of $\hess(f)$, i.e.
$$\hess(f)(Y,Z)=\hess(f)(Z,Y)$$
for every smooth vector fields $Y$ and $Z$,
is replaced by the much more
involved ``commutation rule''
$$g\pa{\nabla_YX,Z}=2\lambda g\pa{
Y,Z}-2\ricc(Y,Z)-g\pa{\nabla_ZX,Y}.$$
Nevertheless, even in in this more general  situation, some important equations valid for gradient solitons still hold, basically in the
same form  (see formulas \eqref{eq3.2} and \eqref{eq3.3} in Lemma \ref{Lem3.1} below).
These equations have been recently considered in \cite{MasRigRim} to infer some upper and lower estimates on $\inf_M S$ and $\sup_M\abs{T}$, where $T$ is the traceless Ricci tensor, under a growth condition on $\abs{X}$. Using Lemma \ref{Lem3.1} below we free $X$ from this restriction, improving on our Theorems 1.1 and 1.2 of \cite{MasRigRim}; for instance, Theorem 1.1 just quoted becomes
\begin{theorem}\label{thmscal}
Let $(M, g, X)$ be a complete, generic Ricci soliton of
dimension $m$ and scalar curvature $S$. Set $S_*=\inf_MS$.
\begin{enumerate}
\item[i)] If $\lambda<0$ then $m\lambda\leq S_*\leq0$. Furthermore, if $S(x_0)=S_*=m\lambda$
for some $x_0\in\ M$, then $(M, g)$ is Einstein and
$X$ is a Killing field; on the other hand, if $S(x_0)=S_*=0$ for
some $x_0\in M$, then $\varrg$ is Ricci flat and $X$ is a homothetic
vector field.
\item[ii)] If $\lambda=0$ then $S_*=0$. Furthermore, if
$S(x_0)=S_*=0$ for some $x_0\in M$, then $\varrg$ is Ricci flat and $X$
is a Killing field.
\item[iii)] If $\lambda>0$ then $0\leq S_*\leq m\lambda$.
Furthermore, if $S(x_0)=S_*=0$ for some $x_0\in M$, then $\varrg$ is
 flat and $X$ is a homethetic vector field, while
$S_*<m\lambda$ unless $M$ is compact, Einstein and $X$ is a Killing
field.
\end{enumerate}
\end{theorem}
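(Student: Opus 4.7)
My plan is to combine the fundamental scalar curvature identity for a generic Ricci soliton recorded in Lemma~\ref{Lem3.1} with the Omori--Yau maximum principle for the $X$-Laplacian $\Delta_X = \Delta - g(X,\nabla\,\cdot\,)$, which the paper establishes unconditionally on every generic Ricci soliton. From Lemma~\ref{Lem3.1} one reads off the identity
\[
\tfrac{1}{2}\Delta_X S \;=\; \lambda S - |\ricc|^2,
\]
and the trace inequality $|\ricc|^2 \geq S^2/m$ upgrades it to the differential inequality
\[
\tfrac{1}{2}\Delta_X S \;\leq\; -\tfrac{1}{m}\,S\,(S - m\lambda),
\]
which is the engine of every case of the theorem.

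To obtain the inequalities in i), ii), iii) I would apply the Omori--Yau principle to $-S$. Once an a~priori lower bound $S_* > -\infty$ is in place (which I would extract from the reaction--diffusion structure of the displayed inequality by applying Omori--Yau to a bounded composition of $S$, such as $-\arctan S$, whenever $S$ threatens to diverge to $-\infty$), the principle furnishes a sequence $\{x_k\}\subset M$ with $S(x_k)\to S_*$, $|\nabla S(x_k)|\to 0$ and $\liminf_k \Delta_X S(x_k)\geq 0$. Passing to the limit produces $S_*(S_* - m\lambda) \leq 0$, giving $m\lambda \leq S_* \leq 0$ for $\lambda<0$, $S_* = 0$ for $\lambda=0$, and $0 \leq S_* \leq m\lambda$ for $\lambda>0$.

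For the rigidity statements, suppose that $S_*$ is attained at a point $x_0\in M$. Writing $|\ricc|^2 = |T|^2 + S^2/m$ with $T$ the traceless Ricci tensor and setting $u := S - S_*$, the Lemma~\ref{Lem3.1} identity rephrases as
\[
\tfrac{1}{2}\Delta_X u \;=\; -c(x)\,u - |T|^2
\]
for a locally bounded function $c = c(S,\lambda)$. Since $u \geq 0$ and $u(x_0) = 0$, the strong maximum principle for $\Delta_X + 2c$ forces $u\equiv 0$, and hence $|T|\equiv 0$. Therefore $\ricc = (S_*/m)g$, which is $\lambda g$ when $S_* = m\lambda$ and vanishes when $S_* = 0$. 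Substituting back into the soliton equation \eqref{1} gives $\mathcal{L}_X g = 0$ in the Einstein case (so $X$ is Killing) and $\mathcal{L}_X g = 2\lambda g$ in the Ricci flat case with $\lambda\neq 0$ (so $X$ is a homothetic vector field); classical rigidity results for proper homotheties on complete Riemannian manifolds then close the flatness conclusion of iii).

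The last clause of iii), asserting that $S_* < m\lambda$ unless $M$ is compact, Einstein and $X$ is Killing, is the most delicate step and the one I expect to be the main obstacle. When $S_* = m\lambda$ is attained the previous argument yields Einstein with positive Einstein constant, and Myers' theorem closes compactness. The real issue is to exclude the non-attained scenario $\inf S = m\lambda$ with $S > m\lambda$ everywhere: here I would sharpen the Omori--Yau conclusion by exploiting both $\Delta_X u(x_k) \geq -1/k$ and $\Delta_X u \leq -2|T|^2$ along the approximating sequence to obtain $|T|(x_k)\to 0$, and then promote this to $|T|\equiv 0$ via the strong maximum principle applied to the nonnegative $\Delta_X$-superharmonic function $u = S - m\lambda$, again relying on the unconditional validity of the Omori--Yau principle for $\Delta_X$ that is the paper's main analytic contribution.
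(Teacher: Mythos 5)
Your overall route is the paper's own: the authors give no self-contained argument but derive Theorem \ref{thmscal} from equation \eqref{eq3.2} of Lemma \ref{Lem3.1} combined with the unconditional Omori--Yau principle of Proposition \ref{Lem3.2}, ``with the same reasoning as in the proof of Theorem 1.1 of \cite{MasRigRim}'', plus Theorem 4.1 of \cite{Tas} for the upgrade from Ricci-flat to flat in case iii). Your derivation of $S_*(S_*-m\lambda)\le 0$ from an Omori--Yau sequence for $-S$, your strong-maximum-principle treatment of the cases where $S_*$ is attained, and your appeal to the rigidity of proper homothetic fields on complete manifolds all match that scheme.

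There is, however, a genuine gap exactly where you flag the difficulty: the final dichotomy in iii) when $S_*=m\lambda$ is \emph{not} attained. There $u=S-m\lambda$ is strictly positive, so the strong maximum principle has no interior zero to propagate, and the sequential information $|T|(x_k)\to 0$ along an Omori--Yau sequence cannot be ``promoted'' to $|T|\equiv 0$: your proposed closing step does not work. The missing ingredient is already in the paper. By Proposition \ref{Prop2.2}, $\Delta_X r\le C-\lambda(r-\delta)$ weakly on $M$, so for $\lambda>0$ the function $\gamma=r$ satisfies \eqref{eq2.7} and \eqref{eq2.8}, and Theorem \ref{thm3.3} shows $M$ is $\Delta_X$-parabolic. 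If $S_*=m\lambda$ then $S\ge m\lambda>0$, and $v=1/S$ is bounded with $\Delta_X v\ge -S^{-2}\Delta_X S\ge \frac{2}{m}S^{-1}(S-m\lambda)\ge 0$ by \eqref{eq3.2}; parabolicity forces $v$, hence $S\equiv m\lambda$, to be constant, whence $|T|\equiv 0$, $\ricc=\lambda g$, $X$ is Killing, and Myers gives compactness. A secondary, fixable point: the specific composition $-\arctan S$ you suggest for ruling out $S_*=-\infty$ does not close as written, because for $S\to-\infty$ the first-order correction equals $2S\,|\nabla(\arctan S)|^2$, and the Omori--Yau bound $|\nabla(\arctan S)(x_k)|<1/k$ does not control $|S(x_k)|/k^2$; one needs the concave, Keller--Osserman-type compositions of Pigola--Rigoli--Setti (as in the proof carried out in \cite{MasRigRim}) rather than $\arctan$.
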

Note that, in the original statement of \cite{MasRigRim}, in case iii) we concluded that if $S(x_0)=S_*=0$ then $\varrg$ is Ricci flat; the present stronger conclusion is then obtained by applying Theorem 4.1 of \cite{Tas}. As a matter of fact, part iii) of Theorem \ref{thmscal} and the differential inequality \eqref{eq3.20} below are key steps in the proof of the following classification results for generic shrinking solitons. To fix the notation, we let $o$ be some chosen origin in $M$ and we set $r(x)=\text{dist}_{(M, g)}(x,o)$.

\begin{theorem}\label{thmB}
Let $(M, g, X)$ be a complete generic shrinking Ricci soliton of dimension three. Furthermore, if $M$ is noncompact, assume that the  scalar curvature is bounded and $|\nabla X|=o(|X|)$ as $r \rightarrow \infty$. Then $(M, g)$ is isometric to a  finite quotient of either
$\mathds{S}^3$, $\erre\times\mathds{S}^2$ or $\erre^3$.
\end{theorem}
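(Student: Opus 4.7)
My plan is to reduce the theorem, in both the compact and noncompact cases, to establishing nonnegative Ricci curvature (equivalently in dimension three, nonnegative sectional curvature) and then to exploit the rigidity of three-dimensional manifolds of nonnegative sectional curvature together with the soliton equation \eqref{1}. The main analytic inputs are part iii) of Theorem \ref{thmscal}, the differential inequality \eqref{eq3.20}, the Bochner-type identities of Lemma \ref{Lem3.1}, and the Omori-Yau maximum principle for the $X$-Laplacian on generic Ricci solitons established earlier in the paper.

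In the compact case I would use that $S_*\geq 0$ by Theorem \ref{thmscal}(iii), and integrate \eqref{eq3.20} over $M$, exploiting that $\int_M\diver(\cdot)\,d\mu=0$, to force the traceless Ricci tensor $T$ to vanish identically. Then $\varrg$ is Einstein, and in dimension three this means constant sectional curvature; since $\lambda>0$ and $S\geq 0$, the Einstein relation $S\equiv 3\lambda>0$ gives constant positive curvature, so $M$ is a finite quotient of $\mathds{S}^3$.

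In the noncompact case, I would apply the Omori-Yau maximum principle for $\Delta_X$ (which holds on every generic Ricci soliton with no assumption on $X$) to a suitable combination of the scalar and traceless Ricci curvature, in analogy with the gradient treatments of Petersen-Wylie and Munteanu-Sesum. The boundedness of $S$ and the decay $\abs{\nabla X}=o(\abs{X})$ as $r\to\infty$ are needed precisely to keep under control the $X$-dependent terms of Lemma \ref{Lem3.1} along any Omori-Yau maximizing sequence. Combining the resulting pointwise estimate with \eqref{eq3.20} should then force $\ricc\geq 0$, together with the degeneracy information needed in the next step.

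Once $\ricc\geq 0$ in dimension three, the dichotomy is standard: if $\ricc>0$ strictly everywhere a Myers-type argument (with drift controlled by $\abs{\nabla X}=o(\abs{X})$) forces $M$ compact, contradicting our assumption; otherwise the smallest eigenvalue of $\ricc$ vanishes somewhere, and a strong maximum principle for $\Delta_X$ along the null distribution produces an isometric splitting $\tilde{M}=\erre\times N^2$ of the universal cover, yielding $\erre\times\mathds{S}^2$ or $\erre^3$ up to finite quotients. The main obstacle I anticipate is precisely this last splitting step: in the gradient case one invokes the Bakry-Emery Bochner formula to derive a de Rham type decomposition, whereas here one must verify that the null direction of $\ricc$ is parallel and that the soliton vector field $X$ is compatible with the splitting — carrying this out rigorously, under the weak hypothesis $\abs{\nabla X}=o(\abs{X})$, is where most of the technical work will lie.
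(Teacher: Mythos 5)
Your outline does not follow the paper's route, and several of its steps have genuine gaps. In the compact case you propose to ``integrate \eqref{eq3.20} over $M$, exploiting that $\int_M\diver(\cdot)\,d\mu=0$''; but $\Delta_{X-2\nabla\log S}$ (like $\Delta_X$ itself) is \emph{not} a divergence-form operator with respect to any available measure when $X$ is not a gradient --- this is precisely the obstruction that separates generic from gradient solitons, where one would integrate against $e^{-f}\,d\mu$. The term $\int_M g\pa{X-2\nabla\log S,\nabla u}\,d\mu$ does not vanish, so the integration yields nothing. In the noncompact case, your claim that Omori--Yau applied to ``a suitable combination'' of $S$ and $T$ forces $\ricc\geq 0$ is unsubstantiated, and this is not how nonnegativity of curvature is obtained: the paper imports Chen's theorem that every complete three-dimensional shrinking soliton has nonnegative sectional curvature, and then Hamilton's strong maximum principle gives the dichotomy ``strictly positive sectional curvature or splits a line'' (the latter producing the $\erre\times\mathds{S}^2$ and $\erre^3$ cases immediately). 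Your subsequent Myers-type step is also false as stated: strictly positive Ricci curvature does not force compactness without a uniform positive lower bound, and $X$ is not assumed bounded, so the drifted Myers theorems do not apply.

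More importantly, the proposal never engages with the actual engine of the proof, which is where the hypotheses ``$S$ bounded'' and ``$|\nabla X|=o(|X|)$'' are really used. In the strictly-positively-curved case one must show that $|T|^2/S^2$ is \emph{constant}, and the Omori--Yau weak maximum principle alone cannot do this: applied to \eqref{eq3.20} it only gives information along a maximizing sequence, not the everywhere-equality in Okumura's lemma needed for rigidity. The paper instead proves that $M$ is parabolic for the operator $\Delta_{X-2\nabla\log S}$ via the Khas'minskii-type criterion of Theorem \ref{thm3.3}, using $\gamma=|X|^2/S^\alpha$ as the exhaustion: Corollary \ref{CO2.18.1} (which needs $|\nabla X|=o(|X|)$, $S^*<+\infty$ and $|\ricc|\leq\Lambda S$, the latter automatic in dimension three from $|\ricc|^2<\tfrac12 S^2$) gives $L\gamma<0$ outside a compact set, while Naber's estimate $|X|\to\infty$ together with $S$ bounded gives $\gamma\to\infty$. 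Parabolicity applied to the bounded $L$-subharmonic function $|T|^2/S^2$ (bounded precisely because of the positive curvature, subharmonic by \eqref{eq3.20} since $W\equiv 0$ in dimension three) then forces it to be constant, and the equality analysis in Lemma \ref{lemma} yields either Einstein or $|T|=\tfrac{1}{\sqrt{6}}S$, the second alternative being excluded by positive sectional curvature. This replaces entirely the splitting argument you flag as the main difficulty. To repair your proposal you would need to (i) justify nonnegative curvature by citing Chen's theorem rather than an Omori--Yau argument, and (ii) replace both the compact-case integration and the noncompact Omori--Yau step by the parabolicity mechanism of Section \ref{sec3}.
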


In higher dimensions Theorem \ref{thmB} generalizes to

\begin{theorem}\label{thmC}
Let $(M, g, X)$ be a complete generic shrinking Ricci soliton of dimension $m>3$. Furthermore, if $M$ is noncompact, assume that the  scalar curvature is bounded and $|\nabla X|=o(|X|)$ as $r \rightarrow \infty$. If, for some $\Lambda>0$,  $|\ricc| \leq \Lambda \, S$  and
\begin{equation}\label{1.4}
|W| \,S \,\leq \, \sqrt{\frac{2(m-1)}{m-2}} \pa{|T|-\frac{1}{\sqrt{m(m-1)}}S}^2 \,,
\end{equation}
then $(M, g)$ is isometric to a  finite quotient of either $\mathds{S}^m$, $\erre\times\mathds{S}^{m-1}$ or $\erre^m$.
\end{theorem}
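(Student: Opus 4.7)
The plan is to extend the argument used for Theorem \ref{thmB} in dimension three (where $W\equiv 0$ is automatic) to the case $m>3$, where the Weyl tensor is generally nonzero. The hypothesis \eqref{1.4} is designed precisely to absorb the Weyl contribution that now appears in the Bochner-type identity for the traceless Ricci tensor $T = \ricc - \tfrac{S}{m} g$. The three main ingredients will be: the $X$-Laplacian identities given by Lemma \ref{Lem3.1}, the sharp algebraic estimate $|W_{ikjl}T^{ij}T^{kl}|\le \sqrt{(m-2)/(2(m-1))}\,|W|\,|T|^2$, and the Omori--Yau maximum principle for $\Delta_X$ which, according to this paper, holds on every generic Ricci soliton without any assumption on $X$.

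First, starting from Lemma \ref{Lem3.1}, I would derive the Bochner identities
\[
\Delta_X S = 2\lambda S - 2|\ricc|^2, \qquad \tfrac12 \Delta_X |T|^2 = |\nabla T|^2 + 2\lambda |T|^2 - 2 R_{ikjl} T^{ij} T^{kl} - \tfrac{2}{m} S|T|^2.
\]
Decomposing $R = W + $ Schouten in the curvature coupling, the Schouten contribution, combined with the $\tfrac{2}{m}S|T|^2$ term and using $\operatorname{tr} T = 0$, collapses into a single term proportional to $|T|^2\bigl(|T|-S/\sqrt{m(m-1)}\bigr)^2$, while the Weyl coupling is bounded pointwise by the algebraic estimate above and then by \eqref{1.4}. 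This should yield, in a form analogous to the three-dimensional inequality \eqref{eq3.20}, a differential inequality of the schematic type
\[
\tfrac12 \Delta_X |T|^2 \;\ge\; |\nabla T|^2 + 2\lambda |T|^2 - C_m\,\frac{|T|^2\bigl(|T|-S/\sqrt{m(m-1)}\bigr)^2}{S}
\]
for a positive dimensional constant $C_m$. A convenient scalar test function is $\varphi = |T|^2/(1+S)$: by the hypotheses that $S$ is bounded and $|\ricc|\le\Lambda S$, one has $|T|\le C(1+S)$, hence $\varphi$ is bounded above.

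I would then apply the Omori--Yau maximum principle for $\Delta_X$ to $\varphi$: along a maximizing sequence $\{x_k\}$ with $|\nabla\varphi(x_k)|\to 0$ and $\limsup\Delta_X\varphi(x_k)\le 0$, passing to the limit forces the non-negative right-hand side to vanish at $\sup\varphi$, so that the alternative is either $T\equiv 0$, or $S\equiv 0$ at some point, or $|T|\equiv S/\sqrt{m(m-1)}$ with $W\equiv 0$ (because \eqref{1.4} becomes equality). A propagation step (strong maximum principle together with the soliton equation) extends the equality from the supremum to all of $M$. The three alternatives correspond to the three model geometries in the statement: if $S$ vanishes somewhere, Theorem \ref{thmscal}(iii) forces $(M,g)$ flat, so a quotient of $\erre^m$; if $|T|\equiv S/\sqrt{m(m-1)}$ with $W\equiv 0$, the eigenvalues of $T$ split as $(-(m-1)\mu,\mu,\ldots,\mu)$, and vanishing Weyl together with this algebraic structure and the soliton equation force a local warped-product structure locally isometric to $\erre\times\mathds{S}^{m-1}$; if $T\equiv 0$ and $\lambda>0$, the shrinker is Einstein, by Theorem \ref{thmscal}(iii) it is compact, and \eqref{1.4} then gives the sphere quotient.

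The principal obstacle is the Bochner computation in the non-gradient setting: the antisymmetric part of $\nabla X$ generates additional terms, absent in the gradient case, which cannot be absorbed pointwise. The hypothesis $|\nabla X| = o(|X|)$ at infinity is exactly what lets these extra terms vanish along the Omori--Yau sequence, so the pointwise inequality can be recovered in the limit; for the compact case no growth hypothesis is needed. A secondary subtlety is the Einstein case ($T\equiv 0$): showing that \eqref{1.4} combined with the compactness supplied by Theorem \ref{thmscal}(iii) truly forces the sphere quotient and excludes shrinking Einstein solitons with nontrivial Weyl tensor.
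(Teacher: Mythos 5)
Your proposal assembles the right ingredients (Lemma \ref{Lem3.1}, Huisken's algebraic estimate for the Weyl coupling, the pinching \eqref{1.4}), but the analytic engine you choose cannot close the argument. After you insert \eqref{1.4} into the differential inequality for $|T|^2/S^2$ (this is exactly inequality \eqref{eq3.20}), the good quadratic term $2\frac{|T|^2}{S^3}\bigl(|T|-\frac{S}{\sqrt{m(m-1)}}\bigr)^2$ is \emph{exactly} cancelled by the Weyl term, and what remains on the right-hand side is merely a nonnegative gradient square. The Omori--Yau or weak maximum principle applied to a bounded-above function whose $X$-Laplacian is $\geq Q$ with $Q\geq 0$ only produces a sequence along which $Q\to 0$; since the supremum need not be attained, there is no point from which to run your ``propagation step,'' and you cannot conclude that $|T|^2/S^2$ is constant or that the trichotomy holds everywhere. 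What is needed is a genuine Liouville theorem, and this is precisely what the paper supplies: it proves that $M$ is parabolic for the drifted operator $L=\Delta_{X-2\nabla\log S}$ (note: not $\Delta_X$, so even the drift term $2g(\nabla\log S,\nabla u)$ would escape your Omori--Yau sequence without a bound on $|\nabla\log S|$). Parabolicity is obtained from Theorem \ref{thm3.3} using the Khas'minskii-type exhaustion $\gamma=|X|^2/S^\alpha$, which is proper because $|X|\to\infty$ at infinity (Naber's growth estimate, valid since the Ricci curvature is bounded) and $S$ is bounded, and which satisfies $L\gamma<0$ outside a compact set by Corollary \ref{CO2.18.1}. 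This also corrects your reading of the hypothesis $|\nabla X|=o(|X|)$: it is not needed to repair the Bochner identities (Lemma \ref{Lem3.1} holds verbatim for any generic soliton, with no extra terms from the antisymmetric part of $\nabla X$), but to make $L\gamma<0$ at infinity via \eqref{eq3.13}.

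Once $|T|^2/S^2$ is constant, your endgame is also looser than what is required. The paper's dichotomy is: either $(M,g)$ is Einstein, or $|T|\equiv\frac{S}{\sqrt{m(m-1)}}$, in which case \eqref{1.4} forces $W\equiv 0$ and one invokes the classification of locally conformally flat shrinking solitons to get the cylinder quotient (not an ad hoc warped-product argument). In the Einstein case the pinching \eqref{1.4} yields $|W|^2\leq\frac{2}{m^2(m-1)(m-2)}S^2$, which triggers Huisken's curvature pinching and then Tachibana's theorem to conclude constant positive sectional curvature; this is how ``shrinking Einstein manifolds with nontrivial Weyl tensor'' are excluded, a point you correctly flag as a gap but do not resolve.
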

Theorems \ref{thmB} and \ref{thmC} extend to the non-gradient case the previous results of Perelman \cite{perelman1}, Cao et al. \cite{caochenzhu} and the first author \cite{cat} and provide results in the non conformally flat case, which was treated by the first author et al. \cite{catmantmazz}.

\begin{rem}
  Tracing equation \eqref{1} it follows that the previous theorems in particular hold simply assuming that $\abs{\nabla X}$ is bounded and, if $m>3$, inequality $\eqref{1.4}$.
\end{rem}

In proving Theorems \ref{thmscal}, \ref{thmB} and  \ref{thmC} we shall need
certain geometric and analytic preliminary results that are interesting in their
own, which we state and prove in Sections \ref{sec2} and \ref{sec3}. In particular, in Section \ref{sec2} (see Proposition \ref{Lem3.2}) we show that the Omori-Yau maximum principle, and therefore the weak maximum principle, holds for the $X$-Laplacian (defined below in equation \eqref{eq2.11}) on every generic Ricci soliton, without any assumption on $X$; this extends the previous result in \cite{lopezrio} to the non-gradient setting. In Section \ref{sec3} we give a sufficient condition for the parabolicity of a large class of linear second order operators, including the $X$-Laplacian. Finally, in Section \ref{sec4} we give a classification result for complete generic Ricci solitons with constant scalar curvature (see Theorem \ref{nameless}).

\section{Geometry of a generic Ricci soliton}\label{sec2}

On a generic Ricci soliton structure $(M, g, X)$ we
introduce (see for instance \cite{MasRigRim}) the differential
operator $\Delta_X$, that we call the \emph{$X$--Laplacian} and that acts on a function $u\in \operatorname{Lip}_{loc}(M)$ as
\begin{equation}\label{eq2.11}
\Delta_Xu=\Delta u-g\pa{X,\nabla u}.
\end{equation}
Of course, \eqref{eq2.11} has to be understood in the weak sense.
Note that in case $X=\nabla f$, that is, in case of a gradient Ricci
soliton with potential $f$, the operator $\Delta_X$ coincides with
the symmetric diffusion operator (sometimes called $f$-Laplacian or drifted
Laplacian)
\[
\Delta_fu=\Delta u-g\pa{\nabla f,\nabla
u}=e^f\diver\pa{e^{-f}\nabla u}.
\]
 We recall that the \emph{Omori-Yau maximum principle}\emph{ for $\Delta_{X}$}
holds on $(M, g)$ if,  given any function $u\in C^{2}(M)$ with $u^{*}=\sup_{M}u<+\infty$, there exists a sequence $\left\{z_{k}\right\}_{k}\subset M$ such that
\begin{equation}\label{O-YMaxPr}
\begin{array}{llll}
&\left(i\right)\,u\left(z_{k}\right)>u^{*}-\frac{1}{k},&\left(ii\right)\,\left|\nabla u\left(z_{k}\right)\right|<\frac{1}{k},&\left(iii\right)\,\Delta_{X}u\left(z_{k}\right)<\frac{1}{k},
\end{array}
\end{equation}
for each $k\in\mathds{N}$. We also say that the \emph{weak maximum principle hold for $\Delta_{X}$} if only $\left(i\right)$ and $\left(iii\right)$ in  \eqref{O-YMaxPr} are met.

From now on we shall
freely use the notation of the method of the moving frame referring
to a fixed local orthonormal coframe for computations. We fix the index
convention $1\leq i,j,k,s,\ldots\leq m=\text{dim}M$, we let
$T=\ricc-\frac{S}{m}g$ denote the traceless
Ricci tensor and we set $T_{ij}$, $R_{ijks}$ and $W_{ijks}$ to
denote the components of $T$, of the Riemann and of the Weyl
curvature tensors respectively. Further notation will be clear from the context. The following equations have been obtained
in Lemmas 9 and 10 of \cite{MasRigRim}.

\begin{lemma}\label{Lem3.1}
Let $(M, g, X)$ be a generic Ricci soliton. Then
\begin{eqnarray}
\label{eq3.2}\frac{1}{2}\Delta_XS&=&\lambda
S-|\ricc|^2\,=\,\lambda S-\frac{S^2}{m}-|T|^2,\\
\label{eq3.3}\frac{1}{2}\Delta_X|T|^2&=&|\nabla
T|^2+2\left(\lambda-\frac{m-2}{m(m-1)}S\right)|T|^2+\frac{4}{m-2}\operatorname{tr}(t^3)-2T_{ki}T_{sj}W_{ksij},
\end{eqnarray}
where the endomorphism $t:TM\rightarrow TM$ is defined by
$t(Y)=T(Y,\,)^\sharp$ and $\operatorname{tr}(t^3)$ is the trace of the operator $t^3=t\circ t\circ t$.
\end{lemma}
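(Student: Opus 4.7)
The plan is to adapt the standard Bochner-type computations available for gradient Ricci solitons to the generic setting. The principal technical difference is that the identity $\nabla S = 2\,\ricc(\nabla f,\cdot)^\sharp$ of equation \eqref{3} is no longer at our disposal when $X$ is not a gradient; its role in the calculation is taken by two ingredients that come directly from the soliton equation \eqref{1}. The first is the purely algebraic identity $R_{ij}X^{i;j} = \lambda S - |\ricc|^2$, obtained by contracting \eqref{1} with $R^{ij}$ and using the symmetry of Ricci to collapse $R^{ij}(X_{i;j}+X_{j;i})/2$ into $R^{ij}X^{i;j}$. The second is the Bochner-type identity $\Delta X_i + R_{ik}X^k = 0$, obtained by taking the divergence $\nabla^j$ of \eqref{1}, applying the contracted second Bianchi identity $\nabla^j R_{ij} = \tfrac{1}{2}\nabla_i S$, using the trace of \eqref{1} in the form $\diver(X) = m\lambda - S$, and commuting $\nabla^j\nabla_i X_j = \nabla_i(\diver X) + R_{ij}X^j$ via the Ricci identity for a $(0,1)$-tensor.

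For \eqref{eq3.2}, the target $\tfrac{1}{2}\Delta_X S = \lambda S - |\ricc|^2$ is equivalent to $\tfrac{1}{2}\Delta S = R_{ij}X^{i;j} + \tfrac{1}{2}\langle X,\nabla S\rangle$. By Bianchi, the right-hand side can be rewritten as $\nabla^i(R_{ij}X^j) = \diver(\ricc(X,\cdot)^\sharp)$. Taking the divergence of the Bochner-type identity above gives $\diver(\ricc(X,\cdot)^\sharp) = -\diver(\Delta X)$, and a short commutator computation relating $\diver(\Delta X)$ to $\Delta(\diver X)$, combined with $\diver(X) = m\lambda - S$, identifies $-\diver(\Delta X)$ with $\tfrac{1}{2}\Delta S$. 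Substituting the algebraic identity closes the argument, while the second equality $|\ricc|^2 = S^2/m + |T|^2$ is nothing but the Pythagorean decomposition $\ricc = T + (S/m)g$ with $|g|^2 = m$.

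For \eqref{eq3.3}, I would start from the formal identity $\tfrac{1}{2}\Delta_X|T|^2 = |\nabla T|^2 + T^{ij}\Delta_X T_{ij}$, which holds for every symmetric $2$-tensor $T$. Since $T$ is traceless, $T^{ij}\Delta_X T_{ij} = T^{ij}\Delta_X R_{ij}$, so everything reduces to computing $\Delta_X R_{ij}$. The plan is to apply $\nabla^k\nabla_k$ to the soliton equation \eqref{1}, commute covariant derivatives through the $(X_{i;j}+X_{j;i})$ terms via Ricci identities, and invoke the Bochner-type identity above to absorb the $\Delta X$ contributions, so as to produce a transport term $X^k\nabla_k R_{ij}$ which combines with $\Delta R_{ij}$ into $\Delta_X R_{ij}$. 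The residual curvature-squared terms are then expanded via the standard Weyl splitting
\begin{equation*}
R_{ijk\ell}=W_{ijk\ell}+\frac{1}{m-2}\pa{R_{ik}g_{j\ell}-R_{i\ell}g_{jk}+R_{j\ell}g_{ik}-R_{jk}g_{i\ell}}-\frac{S}{(m-1)(m-2)}\pa{g_{ik}g_{j\ell}-g_{i\ell}g_{jk}},
\end{equation*}
and contracted with $T^{ij}$. Tracelessness of $T$, together with $T^{ij}R_{ij}=|T|^2$, makes most cross-terms collapse into precisely $2\pa{\lambda - \tfrac{m-2}{m(m-1)}S}|T|^2$, the trace term $\tfrac{4}{m-2}\operatorname{tr}(t^3)$, and the Weyl term $-2T_{ki}T_{sj}W_{ksij}$.

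The main obstacle is the bookkeeping of the numerous commutators and curvature contractions arising in $\Delta_X R_{ij}$ and in its subsequent reduction via the Weyl decomposition. In the gradient case these manipulations are considerably streamlined by Hamilton's identity \eqref{1.4Hamilton} and by \eqref{3}; in the generic setting neither is available, so the Bochner-type identity $\Delta X + \ricc(X,\cdot)^\sharp = 0$ together with repeated use of Ricci commutation formulas must play the same role, and this is the source of essentially all the technical complication.
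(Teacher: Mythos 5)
The paper does not actually prove this lemma itself --- it imports \eqref{eq3.2} and \eqref{eq3.3} directly from Lemmas 9 and 10 of \cite{MasRigRim} --- and your derivation follows the same standard route used there: the traced and differentiated consequences of \eqref{1} (in particular $\diver X=m\lambda-S$ and the Bochner-type identity $\Delta X+\ricc(X,\,)^\sharp=0$) play the role of the missing identity \eqref{3}, and the Weyl decomposition collapses the curvature contractions. Your key steps check out --- e.g.\ $R_{ikjl}R^{kl}T^{ij}=W_{ikjl}T^{kl}T^{ij}+\tfrac{m-2}{m(m-1)}S|T|^2-\tfrac{2}{m-2}\operatorname{tr}(t^3)$ reproduces exactly the coefficients in \eqref{eq3.3}, and the commutation $\diver(\Delta X)=\Delta(\diver X)+\diver\big(\ricc(X,\,)^\sharp\big)$ closes the argument for \eqref{eq3.2} --- so the outline is correct.
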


Having defined $r(x)$ in the Introduction we now deduce an
upper estimate for $\Delta_Xr$ which depends only on a lower bound
for
\begin{equation}\label{2.4}
\ricc_X=\ricc+\frac{1}{2}\mathcal{L}_Xg.
\end{equation}
We denote with $\operatorname{cut}(o)$ the cut-locus of the origin $o$.
\begin{proposition}\label{Prop2.2}
Let $(M, g)$ be a complete Riemannian manifold of
dimension $m\geq 2$ and let $X$ be a vector field on $M$. Suppose that,
for some $F\in C^0(\erre_0^+)$,
\begin{equation}\label{2.5}
\ricc_X\geq -(m-1)F(r)g.
\end{equation}
Then there exist a constant $C>0$ and $\delta>0$ sufficiently small such that
\begin{equation}\label{2.6}
\Delta_Xr(x)\leq C+(m-1)\int_\delta^{r(x)}F(t)\,dt
\end{equation}
pointwise on $M\setminus\left(\{o\}\cup\operatorname{cut}(o)\right)$ and
weakly on $M$.
\end{proposition}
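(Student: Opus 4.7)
The plan is to run the classical second-variation/index form argument for the distance function and then trade the $\ricc$ term for a $\ricc_X$ term by means of an integration by parts, arranging the cutoff so that the residual $X$-dependent integral is supported inside a small fixed ball around $o$.

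First, for $x\in M\setminus\pa{\set{o}\cup\operatorname{cut}(o)}$, let $\gamma\colon[0,r(x)]\to M$ be the unique unit-speed minimizing geodesic from $o$ to $x$, and let $E_1,\ldots,E_{m-1}$ be parallel orthonormal fields along $\gamma$ orthogonal to $\dot\gamma$. For any piecewise $C^1$ function $\phi\colon[0,r(x)]\to\erre$ with $\phi(0)=0$, $\phi(r(x))=1$, the index lemma applied to the variations $V_i=\phi E_i$, summed in $i$, yields the classical bound
\begin{equation*}
\Delta r(x)\leq (m-1)\int_0^{r(x)}(\phi')^2\,dt-\int_0^{r(x)}\phi^2\,\ricc(\dot\gamma,\dot\gamma)\,dt.
\end{equation*}

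Next, along $\gamma$ the identity $\nabla_{\dot\gamma}\dot\gamma=0$ gives $\tfrac12(\mathcal{L}_Xg)(\dot\gamma,\dot\gamma)=g(\nabla_{\dot\gamma}X,\dot\gamma)=\tfrac{d}{dt}g(X,\dot\gamma)$. Writing $\ricc=\ricc_X-\tfrac12\mathcal{L}_Xg$ in the previous inequality, integrating by parts using $\phi(0)=0$, $\phi(r(x))=1$, and inserting the lower bound $\ricc_X\geq-(m-1)F(r)g$ gives, after subtracting $g(X,\nabla r)$ from both sides,
\begin{equation*}
\Delta_Xr(x)\leq (m-1)\int_0^{r(x)}\sq{(\phi')^2+\phi^2F(t)}\,dt\;-\;2\int_0^{r(x)}\phi\phi'\,g(X,\dot\gamma)\,dt.
\end{equation*}

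The crux is then to confine $(\phi')^2$ and $\phi\phi'$ to a fixed neighborhood of $o$: choose $\delta>0$ smaller than the injectivity radius at $o$ and set $\phi(t)=t/\delta$ on $[0,\delta]$, $\phi(t)\equiv 1$ on $[\delta,r(x)]$. Both factors are then supported in $[0,\delta]$, where $\gamma(t)\in\ol{B_\delta(o)}$, so $\abs{g(X,\dot\gamma)}\leq\sup_{\ol{B_\delta(o)}}\abs{X}$. Direct computations yield
\begin{equation*}
(m-1)\int_0^\delta(\phi')^2\,dt=\frac{m-1}{\delta},\qquad\abs{-2\int_0^\delta\phi\phi'\,g(X,\dot\gamma)\,dt}\leq \sup_{\ol{B_\delta(o)}}\abs{X},
\end{equation*}
while $(m-1)\int_0^\delta\phi^2F(t)\,dt$ is controlled by a quantity depending only on $\delta$ and $F|_{[0,\delta]}$. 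Defining $C$ as the sum of these three bounds produces \eqref{2.6} pointwise on $M\setminus\pa{\set{o}\cup\operatorname{cut}(o)}$. Finally, the weak inequality on all of $M$ follows from the standard Calabi barrier trick: for $y\in\operatorname{cut}(o)$ and small $\eps>0$, the function $r_\eps(\cdot)=\eps+\dist(\cdot,\gamma_y(\eps))$ (with $\gamma_y$ a minimizing geodesic from $o$ to $y$) is smooth near $y$, satisfies $r_\eps\geq r$ with equality at $y$, inherits the pointwise bound, and yields the weak inequality across the cut locus by an integration against nonnegative test functions.

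The main difficulty is controlling the residual term $\int\phi\phi'g(X,\dot\gamma)\,dt$ without any hypothesis on the size or growth of $X$; this forces the non-standard cutoff that localizes the $X$-dependence inside $B_\delta(o)$, so that the whole dependence on $X$ can be absorbed into a finite constant $C=C(m,\delta,F,X|_{B_\delta(o)})$.
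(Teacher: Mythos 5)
Your argument is correct, but it takes a genuinely different route from the paper's. For the pointwise estimate the paper applies the B\"ochner formula to $r$ to obtain the Riccati inequality $\psi'+\frac{1}{m-1}\psi^2\leq-\ricc(\dot\gamma,\dot\gamma)$ for $\psi=(\Delta r)\circ\gamma$, and then observes that $\psi_X=(\Delta_Xr)\circ\gamma$ satisfies $\psi_X'=\psi'-\frac12\mathcal{L}_Xg(\dot\gamma,\dot\gamma)$, since $\frac{d}{dt}\big(g(X,\nabla r)\circ\gamma\big)=g(\nabla_{\dot\gamma}X,\dot\gamma)$; hence $\psi_X'\leq-\ricc_X(\dot\gamma,\dot\gamma)\leq(m-1)F(t)$, and integration from $\delta$ to $r(x)$ gives \eqref{2.6} with $C=\max_{\partial B_\delta}\Delta_Xr$. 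Your index-form computation reaches the same conclusion by integrating $\phi^2\frac{d}{dt}g(X,\dot\gamma)$ by parts: the boundary term reproduces $g(X,\nabla r)(x)$ exactly, and the residual term $2\int\phi\phi'\,g(X,\dot\gamma)$ is supported in $\ol{B_\delta(o)}$, hence bounded by $\sup_{\ol{B_\delta(o)}}|X|$, which is finite by compactness. Both proofs exploit the same crucial fact, namely that the $\mathcal{L}_Xg$ part of $\ricc_X$ is an exact derivative along $\gamma$ and therefore costs nothing outside a fixed small ball; the paper's ODE version absorbs it into $\psi_X$ before integrating, yours absorbs it after, and your constant is the more explicit $C=(m-1)/\delta+\sup_{\ol{B_\delta}}|X|+(m-1)\int_0^\delta|F|$ while the paper's only uses continuity of $\Delta_Xr$ on $\partial B_\delta$. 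For the weak extension across $\operatorname{cut}(o)$ the two arguments also diverge: the paper gives a self-contained test-function computation based on the Cheng--Yau exhaustion of $M\setminus\operatorname{cut}(o)$ by domains starshaped with respect to $o$, using the favourable sign of the boundary term $\int_{\partial\Omega_n}g(\nabla r,\nu)\varphi$, whereas you invoke Calabi's barrier construction. The latter is a legitimate standard alternative, but as written it is asserted rather than carried out: since \eqref{2.6} must hold in the distributional sense for the first-order operator $\Delta_X$ acting on the Lipschitz function $r$, you should at least record that an upper bound in the barrier (or viscosity) sense implies the same bound in the sense of distributions, and note that the barrier $r_\eps$ satisfies the comparison with respect to the shifted origin $\gamma_y(\eps)$, so a short limiting argument in $\eps$ is needed to recover the right-hand side of \eqref{2.6} evaluated at $r(x)$.
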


\begin{proof} Fix $x\in
M\setminus\left(\{o\}\cup\operatorname{cut}(o)\right)$ and let
$\gamma:[0,l]\rightarrow M$, with $l=\text{length}(\gamma)$, be a
minimizing geodesic such that $\gamma(0)=o$ and $\gamma(l)=x$. Note
that $F(r(\gamma(t)))=F(t)$ for every $t\in[0,l]$. From B\"{o}chner
formula, see Remark \ref{remark1} below, applied to the distance
function $r$ outside $\{o\}\cup\text{cut}(o)$ we have
$$0=|\hess(r)|^2+\ricc(\nabla r,\nabla r)+\langle\nabla\Delta r,\nabla
r\rangle$$ so that, using the inequality
$$|\hess(r)|^2\geq\frac{(\Delta r)^2}{m-1}$$ it follows that
the function $\psi(t)=(\Delta r)\circ\gamma(t)$, for $t\in(0,l]$
satisfies the Riccati differential inequality
\begin{equation}\label{eq3.3.5}
\psi^\prime+\frac{1}{m-1}\psi^2\leq-\ricc(\dot{\gamma},\dot{\gamma})\qquad\text{on
}(0,l],
\end{equation}
where $\dot{\gamma}(t)$ is the tangent vector of $\gamma$ at time
$t$. We now define
$\psi_X(t)=(\Delta_Xr)\circ\gamma(t)=\psi(t)-\langle X,\nabla
r\rangle\circ\gamma(t)$ so that
$$
\psi^\prime_X=\psi^\prime-\big(\langle X,\nabla r\rangle\circ\gamma\big)^\prime=
\psi^\prime-\frac{1}{2}\mathcal{L}_Xg(\dot{\gamma},\dot{\gamma}).
$$
Thus using \eqref{eq3.3.5} we obtain
$$
\psi^\prime_X\leq-\frac{\psi^2}{m-1}-\ricc(\dot{\gamma},\dot{\gamma})-\frac{1}{2}\mathcal{L}_Xg(\dot{\gamma},\dot{\gamma})
$$
and from \eqref{2.4}
$$
\psi^\prime_X\leq-\ricc_X(\dot{\gamma},\dot{\gamma}).
$$
From
assumption \eqref{2.5} it follows that
$$
\psi^\prime_X(t)\leq(m-1)F(t)
$$
for $t \in (0, l]$. Choosing $\delta>0$ so
small that the geodesic ball $B_\delta(o)$ is inside the domain
of the normal coordinates at $o$ and setting $C=\max_{\partial
B_\delta}\Delta_Xr$, integration for $t$ between $\delta$ and $r(x)$  gives
$$
\Delta_Xr(x)\leq C+(m-1)\int_\delta^{r(x)}F(t)\,dt
$$
pointwise in $M\setminus(\{o\}\cup\text{cut}(o))$, so that
\eqref{2.6} follows. 
To show the validity of this inequality weakly on all of $M$ we argue in a way similar to that of Lemma 2.5 of \cite{PigRigSet}; for the sake of completeness we report the reasoning. By an observation of Cheng and Yau, \cite{chengYau75}, we can consider an exhaustion $\set{\Omega_n}$ of $M\setminus \operatorname{cut}(o)$ by bounded domains with smooth boundaries starshaped with respect to $o$. Fix $n$ and let $\nu$ be the outward unit normal to $\partial\Omega_n$; denote with $\rho(x) = \operatorname{dist}\pa{x, \partial\Omega_n}$ with the convention that $\rho(x)>0$ if $x\in \Omega_n$ and $\rho(x)<0$ if $x\not\in \Omega_n$. Thus $\rho$ is the radial coordinate for the Fermi coordinates relative to $\partial\Omega_n$. By Gauss lemma $\abs{\nabla \rho}=1$ and $\nabla\rho = -\nu$ on $\partial\Omega_n$. Let
\[
\Omega_{n, \eps} = \set{x\in\Omega_n : \rho(x)>\eps}
\]
for some $\eps>0$ sufficiently small and define the Lipschitz function
\[
\psi_\eps(x) = \begin{cases}
  1 &\text{if} \quad x\in\Omega_{n, \eps} \\ \rho(x)/\eps &\text{if} \quad x\in\Omega_n\setminus\Omega_{n, \eps} \\ 0 &\text{if} \quad x\in M\setminus\Omega_{n}.
\end{cases}
\]
Let $\varphi \in C^\infty_0\pa{M}$, $\varphi\geq 0$; then $\varphi\psi_\eps \in W^{1, 2}_0\pa{\Omega_n}$ and $\varphi\psi_\eps\geq 0$. Because of the validity of \eqref{2.6} in $\Omega_n\setminus\set{o}$ and Gauss lemma, having set $G(x)$ for the right-hand side of \eqref{2.6} we have
\begin{align*}
  \int_{\Omega_n}G(x)\varphi\psi_\eps &\geq  \int_{\Omega_n}-g\pa{\nabla r, \nabla\pa{\varphi\psi_\eps}}-g\pa{X, \nabla r}\varphi\psi_\eps  \\ &=-\pa{\int_{\Omega_n}g\pa{\nabla r, \nabla\varphi}\psi_\eps + g\pa{X, \nabla r}\varphi\psi_\eps}-\frac{1}{\eps}\int_{\Omega_n\setminus\Omega_{n, \eps}}g\pa{\nabla r, \nabla \rho}\varphi.
\end{align*}
Therefore, by the co-area formula,
\[
 \int_{\Omega_n}G(x)\varphi\psi_\eps \geq -\pa{\int_{\Omega_n}g\pa{\nabla r, \nabla\varphi}\psi_\eps + g\pa{X, \nabla r}\varphi\psi_\eps}-\frac{1}{\eps}\int_0^\eps dt\int_{\partial\Omega_{n, t}}g\pa{\nabla r, \nabla \rho}\varphi.
\]
Letting $\eps \downarrow 0^+$ we get
\[
 \int_{\Omega_n}G(x)\varphi \geq -\pa{\int_{\Omega_n}g\pa{\nabla r, \nabla\varphi} + g\pa{X, \nabla r}\varphi}+\int_{\partial\Omega_{n}}g\pa{\nabla r, \nu}\varphi,
\]
and since $\Omega_n$ is starshaped,
\[
 \int_{\Omega_n}G(x)\varphi \geq -\pa{\int_{\Omega_n}g\pa{\nabla r, \nabla\varphi} + g\pa{X, \nabla r}\varphi}.
\]
By letting $n\ra +\infty$, observing that $\operatorname{cut}(o)$ has measure $0$ and  $\operatorname{supp}\varphi$ is compact, using Fatou's Lemma the above yields
\[
 \int_{M}G(x)\varphi \geq -\pa{\int_{M}g\pa{\nabla r, \nabla\varphi} + g\pa{X, \nabla r}\varphi},
\]
giving the validity of \eqref{2.6} weakly on all of $M$.
\end{proof}

If we let $(M, g, X)$ be a generic Ricci soliton with $M$
complete, then \eqref{2.5} holds in the form
$$\ricc_X=\lambda g$$ so that we can choose
$F(t)=-\frac{\lambda}{m-1}$. It follows that
\begin{equation*}
\Delta_Xr\leq C-\lambda\pa{r-\delta}
\end{equation*}
on $M\setminus(\{o\}\cup\operatorname{cut}(o))$ and weakly on all of $M$.

Proceeding in a way similar to that in the proof of Theorem~3 in
\cite{AliDajRig} we have the following proposition that improves on
\cite{MasRigRim}.

\begin{proposition}\label{Lem3.2}
Let $(M, g, X)$ be a generic Ricci soliton with
$(M, g)$ complete. Then the Omori--Yau and
therefore the weak maximum principles hold for the operator
$\Delta_X$.
\end{proposition}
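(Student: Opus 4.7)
The plan is to apply the Khas'minskii-type criterion for the Omori--Yau maximum principle, exactly in the spirit of Theorem 3 of \cite{AliDajRig}, whose adaptation to the $X$-Laplacian needs only the upper estimate on $\Delta_X r$ just established. On a generic Ricci soliton, \eqref{1} gives $\ricc_X = \lambda g$, so Proposition \ref{Prop2.2} applies with $F(t)\equiv -\lambda/(m-1)$, producing
\[
\Delta_X r \,\leq\, C - \lambda\pa{r-\delta}
\]
pointwise on $M\setminus\pa{\set{o}\cup\operatorname{cut}(o)}$ and weakly on $M$. Thus $\Delta_X r$ grows at most linearly in $r$, with \emph{no} assumption on $X$.

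Next I would exhibit an auxiliary function witnessing Omori--Yau. A natural choice is $\gamma(x) = \log\pa{1+r(x)^2}$, which is $C^\infty$ outside $\set{o}\cup\operatorname{cut}(o)$ and Lipschitz on all of $M$. One computes $\abs{\nabla \gamma} = \frac{2r}{1+r^2}$, which is bounded, and, outside the cut locus,
\[
\Delta_X \gamma \,=\, \frac{2(1-r^2)}{(1+r^2)^2} + \frac{2r}{1+r^2}\,\Delta_X r \,\leq\, A + B\,\frac{r(1+r)}{1+r^2} \,\leq\, K
\]
for suitable constants $A,B,K>0$, since the product of the decaying factor $2r/(1+r^2)$ and the linearly growing upper bound for $\Delta_X r$ is uniformly bounded. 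The same estimate is valid weakly on $M$ by exactly the same cut-off argument used at the end of the proof of Proposition \ref{Prop2.2} (approximating through the starshaped exhaustion $\set{\Omega_n}$ and letting $\eps\downarrow 0$). Together with $\gamma(x)\to+\infty$ as $r(x)\to+\infty$, these three properties of $\gamma$ are precisely the hypotheses of the Khas'minskii test for $\Delta_X$.

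To conclude, given $u\in C^2(M)$ with $u^*=\sup_M u<+\infty$, apply this Khas'minskii-type criterion in the standard way: for each $\eta>0$ consider the perturbed function $u_\eta = u - \eta \gamma$, which attains its supremum at some interior point (or else one passes through a standard approximation), and use the bounds on $\abs{\nabla\gamma}$ and $\Delta_X\gamma$ to extract a sequence $\set{z_k}$ verifying the three conditions \eqref{O-YMaxPr}. The weak maximum principle then follows a fortiori by dropping condition $\eqref{O-YMaxPr}(ii)$.

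The main technical point is the step in which the pointwise bound on $\Delta_X \gamma$ is upgraded to a weak bound on all of $M$: one must carefully re-run the starshaped-exhaustion argument of Proposition \ref{Prop2.2} for the composition $\gamma=\log(1+r^2)$ rather than for $r$ itself, controlling the boundary contribution via Gauss' lemma. Once this is in place, the rest is a by-now standard application of the Khas'minskii machinery, and no growth or sign condition on $X$ is required because the soliton equation has already absorbed $X$ into the Ricci-type bound $\ricc_X=\lambda g$.
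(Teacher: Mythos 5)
Your proposal is correct and follows essentially the same route as the paper, which simply observes that $\ricc_X=\lambda g$ makes Proposition \ref{Prop2.2} applicable with $F\equiv-\lambda/(m-1)$, yielding $\Delta_X r\leq C-\lambda(r-\delta)$, and then invokes the Khas'minskii-type argument of Theorem 3 in \cite{AliDajRig}; you merely make the Khas'minskii function $\gamma=\log(1+r^2)$ explicit. The only stylistic remark is that in the Omori--Yau argument the lack of smoothness of $r$ at the cut locus is usually handled pointwise via Calabi's trick with smooth upper support functions rather than by upgrading to a distributional inequality, but either device closes the gap.
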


We explicitly remark that there are no assumptions on $X$ besides
that of satisfying \eqref{1} for some $\lambda\in\erre$.



With the same reasoning as in the proof of Theorem 1.1 in \cite{MasRigRim}, as a consequence of Lemma \ref{Lem3.1} and Proposition \ref{Lem3.2} we immediately deduce the validity of Theorem \ref{thmscal}. As for Theorem 1.2 of \cite{MasRigRim}, we state here the new improved version; again, its proof follows the same lines of \cite{MasRigRim} with the replacement of Lemma 3.3 there with the improved version in Proposition \ref{Lem3.2}.
\begin{theorem}\label{thm2.8.1}
  Let $\pa{M, g, X}$ be a complete, generic Ricci soliton of dimension $m\geq 3$, scalar curvature $S$, traceless Ricci tensor $T$ and Weyl tensor $W$. Suppose that
  \begin{equation*}
    i)\, S^{*}=\sup_M S(x)<+\infty; \qquad ii)\, \abs{W}^* = \sup_M \abs{W} <+\infty.
  \end{equation*}
  Then, either $\varrg$ is Einstein or $|T|^*=\sup_M |T|$ satisfies
\begin{equation*}
|T|^*\geq\frac{1}{2}\pa{\sqrt{m(m-1)}\lambda-S^{*}\frac{m-2}{\sqrt{m(m-1)}}\,-\sqrt{\frac{m(m-2)}{2}}\abs{W}^*}.
\end{equation*}
In particular, if $\varrg$ is conformally flat, then either $\varrg$ has constant sectional curvature, or
\begin{equation*}
|T|^*\geq\frac{1}{2}\pa{\sqrt{m(m-1)}\lambda-S^{*}\frac{m-2}{\sqrt{m(m-1)}}}.
\end{equation*}
\end{theorem}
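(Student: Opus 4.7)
The plan is to apply the Omori--Yau maximum principle of Proposition \ref{Lem3.2} to the smooth function $u=|T|^{2}$ and exploit the identity \eqref{eq3.3} of Lemma \ref{Lem3.1}. First I would dispose of the trivial cases: if $|T|^{*}=+\infty$ the asserted bound is vacuous, since its right-hand side is finite under hypotheses i)--ii); if instead $|T|^{*}=0$ then $T\equiv 0$, so $\ricc=(S/m)g$ and $\varrg$ is Einstein. Hence I may assume $0<|T|^{*}<+\infty$, and Proposition \ref{Lem3.2} furnishes a sequence $\{z_k\}\subset M$ with $|T|^{2}(z_k)\to(|T|^{*})^{2}$ and $\Delta_{X}|T|^{2}(z_k)<1/k$.

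The second step is to dominate the two sign--indefinite terms in \eqref{eq3.3} by the sharp pointwise algebraic inequalities
\[
|\operatorname{tr}(t^{3})|\leq \frac{m-2}{\sqrt{m(m-1)}}\,|T|^{3},\qquad |T_{ki}T_{sj}W_{ksij}|\leq \sqrt{\frac{m-2}{2(m-1)}}\,|W|\,|T|^{2},
\]
valid pointwise for any traceless symmetric $(0,2)$-tensor $T$ and any algebraic Weyl tensor $W$ on an $m$-dimensional Riemannian manifold (the first is the classical Okumura-type bound obtained by diagonalizing $t$, the second a Huisken-type estimate exploiting the algebraic symmetries and trace-freeness of $W$). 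Substituting into \eqref{eq3.3}, dropping $|\nabla T|^{2}\geq 0$, and evaluating at $z_k$ one gets
\[
\frac{1}{2k} > 2\pa{\lambda-\frac{m-2}{m(m-1)}S(z_k)}|T|^{2}(z_k)-\frac{4}{\sqrt{m(m-1)}}|T|^{3}(z_k)-2\sqrt{\frac{m-2}{2(m-1)}}|W|(z_k)\,|T|^{2}(z_k).
\]

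Dividing by the eventually positive quantity $|T|^{2}(z_k)$, using $S(z_k)\leq S^{*}$ and $|W|(z_k)\leq |W|^{*}$, and letting $k\to+\infty$ yields
\[
\frac{4}{\sqrt{m(m-1)}}\,|T|^{*}\geq 2\lambda-\frac{2(m-2)}{m(m-1)}S^{*}-2\sqrt{\frac{m-2}{2(m-1)}}|W|^{*}.
\]
Multiplying by $\sqrt{m(m-1)}/4$ and using the identity $\sqrt{m(m-1)}\sqrt{\tfrac{m-2}{2(m-1)}}=\sqrt{\tfrac{m(m-2)}{2}}$ rearranges this into the stated lower bound on $|T|^{*}$. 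For the conformally flat case $W\equiv 0$ removes the last term; moreover, if in that case $|T|^{*}=0$ then $\varrg$ is Einstein and conformally flat in dimension $\geq 3$, hence of constant sectional curvature, which yields the dichotomy.

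I expect the main obstacle to be verifying the precise constants in the two pointwise algebraic inequalities above: the first is a constrained eigenvalue optimization after diagonalizing $T$ in an orthonormal frame, while the second requires careful bookkeeping of the algebraic identities satisfied by the Weyl tensor. Everything else is a direct assembly of Lemma \ref{Lem3.1} (the $\Delta_{X}$-evolution of $|T|^{2}$) and Proposition \ref{Lem3.2} (the Omori--Yau principle for $\Delta_{X}$), exactly as in Theorem 1.2 of \cite{MasRigRim} but with the new, unconditional Omori--Yau principle replacing its more restrictive predecessor, which is what permits us to drop the growth assumption on $X$.
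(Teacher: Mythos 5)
Your proposal is correct and follows essentially the same route the paper intends: the paper's proof is exactly the argument of Theorem 1.2 in \cite{MasRigRim} (Omori--Yau applied to $|T|^2$ together with \eqref{eq3.3}, Okumura's lemma and Huisken's estimate), with the old restricted maximum principle replaced by the unconditional Proposition \ref{Lem3.2}. Your constants and limiting argument check out, including the reduction of the trivial cases $|T|^*=0$ and $|T|^*=+\infty$ and the conformally flat specialization.
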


We now develop some further auxiliary results.
First we recall the following formula due to B\"{o}chner,
\cite{BocYan}, and rediscovered many times in recent years.

\begin{lemma}\label{Le2.5}
Let $Y$ be a vector field on the Riemannian manifold
$(M, g)$. Then
\begin{equation}\label{eq3.5}
\diver\pa{\mathcal{L}_Yg}(Y)=\frac{1}{2}\Delta|Y|^2-|\nabla
Y|^2+\ricc(Y,Y)+\nabla_Y(\diver Y).
\end{equation}
\end{lemma}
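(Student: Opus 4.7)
The plan is to verify the identity by direct computation in a local orthonormal coframe. With the paper's index conventions and commas denoting covariant derivatives, the $1$-form $Y^\flat$ has components $Y_i$ and $(\mathcal{L}_Y g)_{ij} = Y_{i,j} + Y_{j,i}$. Taking the divergence of this symmetric tensor and pairing with $Y$ gives
\begin{equation*}
\diver(\mathcal{L}_Y g)(Y) \;=\; Y^i\, Y_{i,j}{}^{,j} \;+\; Y^i\, Y_{j,i}{}^{,j},
\end{equation*}
so \eqref{eq3.5} reduces to matching the first summand with $\frac{1}{2}\Delta|Y|^2 - |\nabla Y|^2$ and the second with $\ricc(Y,Y) + \nabla_Y(\diver Y)$.

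For the first summand, apply the scalar Bochner identity to $|Y|^2 = Y^a Y_a$: a direct expansion in the orthonormal frame yields
\begin{equation*}
\frac{1}{2}\Delta|Y|^2 \;=\; |\nabla Y|^2 \;+\; Y^i\, Y_{i,j}{}^{,j},
\end{equation*}
since $Y^i Y_{i,j}{}^{,j}$ is precisely the pairing of $Y^\flat$ with its rough Laplacian. This accounts for two of the four terms on the right-hand side of \eqref{eq3.5}.

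For the second summand, commute the covariant derivatives in $Y_{j,i}{}^{,j}$ via the Ricci commutation rule applied to the $1$-form $Y^\flat$. This rewrites $Y_{j,i,k} - Y_{j,k,i}$ in terms of a Riemann-tensor contraction, and tracing on the pair $(j,k)$ converts $Y_{j,i}{}^{,j}$ into $(\diver Y)_{,i}$ plus the single contraction of the Riemann tensor that produces the Ricci tensor: $R_{ik}Y^k$. Contracting once more with $Y^i$ yields $\nabla_Y(\diver Y) + \ricc(Y,Y)$, and summing the two contributions gives \eqref{eq3.5}.

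The only nontrivial bookkeeping is the sign of the curvature contraction: once the paper's Riemann-tensor convention is fixed, one must verify that the trace of the commutator of two covariant derivatives on $Y^\flat$ delivers exactly $+\ricc$ rather than its negative. This is essentially the only obstacle; otherwise the argument is the Bochner/Weitzenb\"ock technique for $1$-forms, kept in its general form where neither $\diver Y$ nor $\mathcal{L}_Y g$ is assumed to vanish. As a consistency check, setting $\mathcal{L}_Y g \equiv 0$ (so that $\diver Y\equiv 0$) reduces \eqref{eq3.5} to the classical Killing-field Bochner identity $\frac{1}{2}\Delta|Y|^2 + \ricc(Y,Y) = |\nabla Y|^2$, which is the expected special case.
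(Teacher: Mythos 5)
Your computation is correct, but note that the paper does not actually prove this lemma: it simply recalls the formula with a citation to B\"ochner--Yano and, in Remark \ref{remark1}, only checks consistency by specializing to $Y=\nabla u$. So you are supplying a proof where the paper supplies none, and the route you take --- splitting $\diver(\mathcal{L}_Yg)(Y)$ into the rough-Laplacian piece $Y^iY_{i,j}{}^{,j}$ (absorbed by the Leibniz expansion of $\tfrac{1}{2}\Delta|Y|^2$) and the commutator piece $Y^iY_{j,i}{}^{,j}$ (handled by the traced Ricci identity) --- is the standard and correct derivation. The one step you flag but do not carry out, the sign of the curvature contraction, does come out as $+\ricc(Y,Y)$: with $(\nabla^2Y)(A,B)-(\nabla^2Y)(B,A)=R(A,B)Y$ and the trace convention $\ricc(A,B)=\sum_j\langle R(e_j,A)B,e_j\rangle$, one gets $\sum_j\langle(\nabla^2Y)(e_j,e_i),e_j\rangle=\nabla_i(\diver Y)+R_{ik}Y_k$, which is exactly what is needed; you could also cross-check this against the paper's own Remark \ref{remark1}, where the identity $\diver(\hess(u))(\nabla u)=\ricc(\nabla u,\nabla u)+g(\nabla\Delta u,\nabla u)$ is the same traced commutation specialized to $Y=\nabla u$. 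Your Killing-field consistency check is a valid complement to the paper's gradient-field one.
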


\begin{rem}\label{remark1}
Formula \eqref{eq3.5} is a generalization of the usual B\"{o}chner
formula. To see this let $u\in C^3(M)$ and let $Y=\nabla u$. Then,
since $\frac{1}{2}\mathcal{L}_{\nabla
u}g=\hess(u)$, equation \eqref{eq3.5}
becomes
$$2\diver(\hess(u))(\nabla
u)=\frac{1}{2}\Delta|\nabla
u|^2-|\hess(u)|^2+\ricc(\nabla u,\nabla
u)+g\pa{\nabla\Delta u,\nabla u}.
$$
Since
\[
\diver(\hess(u))(\nabla u)=\ricc(\nabla u,\nabla
u)+g\pa{\nabla\Delta u,\nabla u},
\]
from the above we
immediately deduce that \eqref{eq3.5} is, in this case, equivalent
to
\[
\frac{1}{2}\Delta|\nabla u|^2=|\hess(u)|^2+\ricc(\nabla u,\nabla u)+g\pa{\nabla\Delta u,\nabla u},
\]
that is, the usual B\"{o}chner formula.
\end{rem}

As a consequence of Lemma \ref{Le2.5} we obtain the following (see also \cite{PetWyPac})

\begin{proposition}\label{PR2.7}
Let $(M, g, X)$ be a generic Ricci soliton. Then
\begin{equation}\label{eq3.6}
\frac{1}{2}\Delta|X|^2=|\nabla X|^2-\ricc(X, X),
\end{equation}
or equivalently
\begin{equation}\label{eq3.7}
\frac{1}{2}\Delta_X|X|^2=|\nabla X|^2-\lambda|X|^2.
\end{equation}
\end{proposition}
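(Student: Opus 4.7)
The plan is to apply Lemma \ref{Le2.5} with $Y=X$ and use the soliton equation \eqref{1} to evaluate the term $\diver(\mathcal{L}_X g)(X)$ and the divergence of $X$.

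First, I would rewrite the soliton equation in the equivalent form $\mathcal{L}_X g = 2\lambda g - 2\ricc$. Taking the trace yields $2\diver X = 2m\lambda - 2S$, so $\diver X = m\lambda - S$, and hence
\[
\nabla_X(\diver X) = X(m\lambda - S) = -X(S) = -g(\nabla S, X).
\]
On the other hand, since $\lambda$ is constant and $g$ is parallel, $\diver(\lambda g) = 0$, so by the contracted second Bianchi identity $\diver(\ricc) = \frac{1}{2}\nabla S$ we obtain
\[
\diver(\mathcal{L}_X g)(X) = -2\diver(\ricc)(X) = -g(\nabla S, X) = -X(S).
\]

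Substituting both expressions into \eqref{eq3.5} with $Y=X$ gives
\[
-X(S) = \frac{1}{2}\Delta|X|^2 - |\nabla X|^2 + \ricc(X,X) - X(S),
\]
and the two $X(S)$ terms cancel, yielding \eqref{eq3.6}. For the equivalent form \eqref{eq3.7}, I would evaluate the soliton equation on the pair $(X,X)$ to get $\ricc(X,X) = \lambda|X|^2 - \frac{1}{2}\mathcal{L}_X g(X,X)$; since $\mathcal{L}_X g(X,X) = 2g(\nabla_X X, X) = g(X,\nabla|X|^2)$, one has
\[
\ricc(X,X) = \lambda |X|^2 - \tfrac{1}{2}g(X,\nabla|X|^2).
\]
Inserting this into \eqref{eq3.6} and recalling the definition $\Delta_X|X|^2 = \Delta|X|^2 - g(X,\nabla|X|^2)$ from \eqref{eq2.11} rearranges precisely to \eqref{eq3.7}.

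No real obstacle is expected: the proof is a direct bookkeeping exercise combining Bochner's identity in the form of Lemma \ref{Le2.5}, the contracted second Bianchi identity, and the traced and un-traced soliton equation. The only point requiring mild care is sign conventions in $\diver(\mathcal{L}_X g)$ and in the definition of $\Delta_X$; everything else is algebraic manipulation.
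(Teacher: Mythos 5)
Your argument is correct and is essentially the paper's own proof: both apply Lemma \ref{Le2.5} with $Y=X$ and use the traced soliton equation together with the contracted second Bianchi identity to show that the terms $\diver(\mathcal{L}_Xg)(X)$ and $\nabla_X(\diver X)$ cancel (you identify their common value as $-X(S)$, while the paper only notes their equality), and the passage from \eqref{eq3.6} to \eqref{eq3.7} via $\mathcal{L}_Xg(X,X)=g\pa{X,\nabla|X|^2}$ is likewise identical.
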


\begin{proof} We trace the soliton equation \eqref{1} to obtain
$$S+\diver X=m\lambda$$ and from this we deduce
\begin{equation}\label{eq3.8}
\nabla S=-\nabla \diver X.
\end{equation}
On the other hand, contracting twice the second Bianchi's identities
we have the well--known formula
\begin{equation}\label{eq3.9}
\nabla S=2\diver\,\ricc.
\end{equation}
Thus, comparing \eqref{eq3.8} and \eqref{eq3.9}, we obtain
\begin{equation}\label{eq3.10}
\nabla\diver X=-2\diver\,\ricc.
\end{equation}
Now taking the divergence of \eqref{1} and using the fact that
$\diver(\lambda g)=0$ we find
$$\diver(\mathcal{L}_Xg)=-2\diver\,\ricc,$$
and from \eqref{eq3.10} we deduce
$$\nabla\diver X=\diver(\mathcal{L}_Xg).$$
In particular
\begin{equation}\label{eq3.11}
\nabla_X\diver X=\diver(\mathcal{L}_Xg)(X).
\end{equation}
Substituting into \eqref{eq3.5} we immediately obtain \eqref{eq3.6}.
As for \eqref{eq3.7}, using \eqref{eq3.6} and the definition of
$\Delta_X$ given in \eqref{eq2.11}, we have
\[
\frac{1}{2}\Delta_X|X|^2=|\nabla X|^2-\lambda|X|^2+\frac{1}{2}\pa{\mathcal{L}_Xg(X, X)-g\pa{
X,\nabla|X|^2}},
\]
from which \eqref{eq3.7} follows since
$$
\mathcal{L}_Xg(X, X)= g\pa{X,\nabla|X|^2}.
$$
\end{proof}

The following formulas can be verified by a simple direct
computation. For any vector fields $Y, Z$ and functions $u,v\in C^2(M)$, with
$v\neq0$ on $M$,
\begin{equation}\label{2.16}
\Delta_Y\left(\frac{u}{v}\right)=\frac{1}{v}\Delta_Yu-\frac{u}{v^2}\Delta_Yv-2g\pa{\nabla\left(\frac{u}{v}\right),\frac{\nabla
v}{v}},
\end{equation}
\begin{equation}\label{2.16.1}
  \Delta_{Y+Z}u = \Delta_Yu -g\pa{Z, \nabla u}.
\end{equation}
The next computational result uses Proposition \ref{PR2.7}.
\begin{lemma}\label{LE2.8}
Let $\pa{M, g, X}$ be a generic Ricci soliton and
$\alpha\in(0,1]$. Suppose $S>0$ on $M$. Then
\begin{equation}\label{eq3.13}
\Delta_{X-2\nabla\log
S}\left(\frac{|X|^2}{S^\alpha}\right)\leq
\frac{2}{S^\alpha}\bigg\{\pa{\frac{2-\alpha}{\alpha}}|\nabla
X|^2
-\sq{(\alpha+1)\lambda-\alpha\frac{|\ricc|^2}{S}}|X|^2\bigg\}.
\end{equation}
\end{lemma}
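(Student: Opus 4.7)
The plan is to compute $\Delta_X\!\left(|X|^2/S^\alpha\right)$ using the quotient identity \eqref{2.16} with $Y=X$, $u=|X|^2$, $v=S^\alpha$, and then to pass from $\Delta_X$ to $\Delta_{X-2\nabla\log S}$ by the drift identity \eqref{2.16.1}. The inputs I need are $\Delta_X|X|^2=2|\nabla X|^2-2\lambda|X|^2$ from Proposition \ref{PR2.7}, and, via the chain rule combined with \eqref{eq3.2}, the formula
\[
\Delta_X S^\alpha = 2\alpha\lambda S^\alpha - 2\alpha S^{\alpha-1}\abs{\ricc}^2+\alpha(\alpha-1)S^{\alpha-2}\abs{\nabla S}^2.
\]
Using $\nabla S^\alpha/S^\alpha=\alpha\nabla\log S$, substitution into \eqref{2.16} already produces the ``good'' terms $\frac{2}{S^\alpha}\bigl[\abs{\nabla X}^2-(\alpha+1)\lambda\abs{X}^2+\alpha\abs{\ricc}^2\abs{X}^2/S\bigr]$, together with the extra pieces $-\alpha(\alpha-1)\abs{X}^2\abs{\nabla S}^2/S^{\alpha+2}$ and $-2\alpha\,g\bigl(\nabla(\abs{X}^2/S^\alpha),\nabla\log S\bigr)$.

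Next I would add the drift correction $+2\,g\bigl(\nabla\log S,\nabla(\abs{X}^2/S^\alpha)\bigr)$ prescribed by \eqref{2.16.1}. Expanding $\nabla(\abs{X}^2/S^\alpha)=S^{-\alpha}\nabla\abs{X}^2-\alpha S^{-\alpha-1}\abs{X}^2\nabla S$ and collecting, the only surviving terms beyond the target right-hand side are
\[
\frac{2(1-\alpha)}{S^{\alpha+1}}\,g(\nabla\abs{X}^2,\nabla S)\;-\;\frac{\alpha(1-\alpha)}{S^{\alpha+2}}\abs{X}^2\abs{\nabla S}^2,
\]
while the $\abs{\nabla X}^2$ contribution still carries coefficient $2/S^\alpha$ rather than the required $2(2-\alpha)/(\alpha S^\alpha)$. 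The $\lambda\abs{X}^2$ and $\abs{\ricc}^2\abs{X}^2/S$ terms, by contrast, already match the right-hand side exactly.

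The last step, which is the main obstacle, is to absorb the cross term $g(\nabla\abs{X}^2,\nabla S)$ into $\abs{\nabla X}^2$ at precisely the right weighted rate. I would first invoke the Kato-type bound $\abs{\nabla\abs{X}^2}\leq 2\abs{X}\abs{\nabla X}$ (immediate from $\nabla_k\abs{X}^2=2X^i\nabla_k X_i$ and Cauchy--Schwarz in the index $i$) together with Cauchy--Schwarz against $\nabla S$, and then apply Young's inequality $2ab\leq \eta a^2+\eta^{-1}b^2$ to the product $\abs{\nabla X}\cdot(\abs{X}\abs{\nabla S}/S)$ with the tuned weight $\eta=2/\alpha$. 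This weight is forced by the algebra: it is the unique choice that simultaneously lifts the $\abs{\nabla X}^2$ coefficient from $2/S^\alpha$ to $2(2-\alpha)/(\alpha S^\alpha)$ and produces a $\frac{\alpha}{2}\abs{X}^2\abs{\nabla S}^2/S^2$ term that exactly cancels the negative $\abs{\nabla S}^2$ remainder after scaling out the common factor $2(1-\alpha)/S^\alpha$. The resulting inequality is precisely \eqref{eq3.13}; observe as a consistency check that for $\alpha=1$ every remainder carries the prefactor $(1-\alpha)=0$, the Young step is not invoked, and the inequality reduces to an identity.
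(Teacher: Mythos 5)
Your proposal is correct and follows essentially the same route as the paper: it derives exactly the identity \eqref{eq3.14} from \eqref{2.16}, \eqref{2.16.1}, \eqref{eq3.2} and \eqref{eq3.7}, and then absorbs the cross term via $|\nabla|X|^2|\leq 2|X||\nabla X|$ and Young's inequality with the same tuned weight (your $\eta=2/\alpha$ is the paper's $\varepsilon=\alpha/2$), which simultaneously yields the coefficient $\frac{2-\alpha}{\alpha}$ and kills the $|\nabla S|^2$ remainder. Your observation that the case $\alpha=1$ degenerates to an identity is a correct sanity check consistent with the paper's computation.
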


\begin{proof}
 Using equations \eqref{eq3.2} and \eqref{eq3.7},
with the aid of \eqref{2.16} and \eqref{2.16.1}, we compute
\begin{align}
\label{eq3.14}
\Delta_{X-2\nabla\log S}\pa{\frac{|X|^2}{S^\alpha}}&=
\frac{2}{S^\alpha}\set{\abs{\nabla X}^2-\left[(\alpha+1)\lambda-\alpha\frac{|\ricc|^2}{S}\right]|X|^2}\\
\nonumber&\quad+\frac{2(1-\alpha)}{S^{\alpha+1}}g\pa{\nabla|X|^2,\nabla
S}+\frac{\alpha(\alpha-1)}{S^{\alpha+2}}|X|^2|\nabla S|^2.
\end{align}

Next we use the inequality $$|\nabla|X|^2|\leq2|X||\nabla X|$$ and
Cauchy and Young's inequalities with $\eps>0$ to
get
\[
\frac{1}{S^{\alpha+1}}g\pa{\nabla S, \nabla|X|^2}\leq \frac{2\abs{X}\abs{\nabla X}\abs{\nabla S}}{S^{\alpha+1}} \leq \frac{1}{\eps}\frac{\abs{\nabla X}^2}{S^\alpha} + \frac{\eps}{S^{\alpha+2}}\abs{X}^2\abs{\nabla S}^2.
\]
Hence, for any $\alpha\in(0,1]$, inserting the previous
inequalities into \eqref{eq3.14} and rearranging terms we obtain
\begin{align*}
\Delta_{X-2\nabla\log S}\pa{\frac{|X|^2}{S^\alpha}}&\leq
\frac{2}{S^\alpha}\set{\sq{1+\frac{1-\alpha}{\eps}}\abs{\nabla X}^2-\left[(\alpha+1)\lambda-\alpha\frac{|\ricc|^2}{S}\right]|X|^2}\\
\nonumber&\quad+\frac{1-\alpha}{S^{\alpha+2}}\pa{2\eps-\alpha}|X|^2|\nabla S|^2.
\end{align*}
Choosing $\varepsilon=\frac{\alpha}{2}$ we finally have
\[
\Delta_{X-2\nabla\log
S}\left(\frac{|X|^2}{S^\alpha}\right)\leq
\frac{2}{S^\alpha}\bigg\{\pa{\frac{2-\alpha}{\alpha}}|\nabla
X|^2
-\sq{(\alpha+1)\lambda-\alpha\frac{|\ricc|^2}{S}}|X|^2\bigg\}.
\]
that is, \eqref{eq3.13}.
\end{proof}
 \vspace{0,4cm}
\begin{cor}\label{CO2.18.1}
  Let $\pa{M, g, X}$ be a generic, shrinking, complete Ricci soliton and assume that $S>0$, $S^*=\sup_M S < +\infty$, $\abs{\ricc}\leq \Lambda S$ for some constant $\Lambda>0$ and
  \begin{equation*}
    \abs{\nabla X} = o\pa{\abs{X}} \quad \text{ as }\, r(x)\ra +\infty.
  \end{equation*}
  Then there exists $\alpha \in (0, 1]$ and a compact $K = K_\alpha \subset M$ such that
  \begin{equation*}
    \Delta_{X-2\nabla\log S}\pa{\frac{\abs{X}^2}{S^\alpha}}<0 \quad \text{ on }\, M\setminus K.
  \end{equation*}
\end{cor}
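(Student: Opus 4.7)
The plan is to read off the conclusion directly from the differential inequality \eqref{eq3.13} of Lemma \ref{LE2.8}, by arranging that the right-hand side becomes strictly negative outside a sufficiently large geodesic ball. The computation has already been done; the task reduces to choosing $\alpha$ and the compact set $K$ cleverly, exploiting the two quantitative hypotheses $|\ricc|\leq\Lambda S$ and $|\nabla X|=o(|X|)$.

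First I would control the coefficient of $|X|^2$ in \eqref{eq3.13}. The bound $|\ricc|\leq\Lambda S$ and $S\leq S^*$ give $|\ricc|^2/S\leq\Lambda^2 S^*$, so
\[
(\alpha+1)\lambda-\alpha\frac{|\ricc|^2}{S}\geq (\alpha+1)\lambda-\alpha\Lambda^2 S^* =: c_\alpha.
\]
Since the soliton is shrinking we have $\lambda>0$, and $c_\alpha\to\lambda$ as $\alpha\to 0^+$. Hence I can fix once and for all $\alpha\in(0,1]$ small enough so that $c_\alpha>0$; concretely any $\alpha<\lambda/(\Lambda^2 S^*-\lambda)^+$ (with the convention that the fraction is $+\infty$ if the denominator is nonpositive) will do.

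With $\alpha$ fixed, I would handle the gradient term using the hypothesis $|\nabla X|=o(|X|)$ as $r\to+\infty$: for every $\eta>0$ there is $R=R(\eta)>0$ such that $|\nabla X|(x)\leq\eta|X|(x)$ whenever $r(x)>R$. Choosing
\[
\eta^{2}\leq \frac{c_\alpha \alpha}{2(2-\alpha)},
\]
the bracket on the right-hand side of \eqref{eq3.13} is estimated, on $\{r>R\}$, by
\[
\Bigl(\tfrac{2-\alpha}{\alpha}\Bigr)|\nabla X|^2-c_\alpha|X|^2\leq -\tfrac{c_\alpha}{2}|X|^2.
\]
Substituting back into \eqref{eq3.13} gives
\[
\Delta_{X-2\nabla\log S}\Bigl(\tfrac{|X|^2}{S^\alpha}\Bigr)\leq -\tfrac{c_\alpha}{S^\alpha}|X|^2\quad\text{on}\quad\{r>R\},
\]
and setting $K=K_\alpha=\overline{B_R(o)}$, which is compact by completeness and the Hopf–Rinow theorem, delivers the conclusion.

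The only delicate point, and in my view the main (minor) obstacle, is passing from the non-strict $\leq$ afforded by Lemma \ref{LE2.8} to the strict $<$ required by the statement: the bound above vanishes precisely at zeros of $X$. This however is essentially forced by the asymptotic condition $|\nabla X|=o(|X|)$, which tacitly requires $|X|>0$ for $r$ large (otherwise the Landau symbol carries no information); enlarging $K$ if necessary to include the (compact, by the same token) set of zeros of $X$ outside $B_R(o)$ yields the strict inequality on $M\setminus K$. Everything else is a routine algebraic manipulation of the already-established inequality \eqref{eq3.13}.
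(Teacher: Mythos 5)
Your argument is correct and is exactly what the paper intends: its own proof of Corollary \ref{CO2.18.1} is the one-line remark that the claim is ``an immediate consequence of the assumptions and of \eqref{eq3.13}'', and your choice of $\alpha$ with $(\alpha+1)\lambda-\alpha\Lambda^2 S^*>0$ together with the $o(|X|)$ absorption of the $|\nabla X|^2$ term is the natural way to fill in that step. Your remark about strictness at zeros of $X$ is also consistent with the paper, which in the application (Section 4) invokes $|X|\to\infty$ as $r\to\infty$, so $X$ is nonvanishing outside a compact set.
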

\begin{proof}
  An immediate consequence of the assumptions and of \eqref{eq3.13}.
\end{proof}

%

Our last ingredient for the proof of the main results comes from Lemma \ref{Lem3.1}.

\begin{lemma}\label{lemma}
Let $(M, g, X)$ be a generic Ricci soliton of dimension $m$
with scalar curvature $S>0$ on $M$. Then
\begin{equation}\label{eq3.20}
\begin{array}{rcl}
\displaystyle\frac{1}{2}\Delta_{X-2\nabla\log
S}\left(\frac{|T|^2}{S^2}\right)&\geq&\displaystyle2\frac{|T|^2}{S^3}\left(|T|-\frac{1}{\sqrt{m(m-1)}}S\right)^2
+\frac{1}{S^3}\left(\frac{|T|}{\sqrt{S}}|\nabla S|-\sqrt{S}|\nabla
T|\right)^2\\
\displaystyle&&\displaystyle-\sqrt{\frac{2(m-2)}{m-1}}\frac{1}{S^2}|W||T|^2.
\end{array}
\end{equation}
\end{lemma}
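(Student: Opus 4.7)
The strategy is to compute $\frac{1}{2}\Delta_{X-2\nabla\log S}(|T|^2/S^2)$ explicitly using the quotient rule \eqref{2.16} and the shift rule \eqref{2.16.1}, substitute the Bochner-type identities \eqref{eq3.2} and \eqref{eq3.3} for $\Delta_X S$ and $\Delta_X|T|^2$, and finally invoke two classical algebraic estimates on the traceless Ricci tensor together with Kato's inequality to recognize the two squares appearing on the right-hand side of \eqref{eq3.20}.

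\smallskip

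\emph{Step 1: The quotient expansion.} Apply \eqref{2.16} with $u=|T|^2$, $v=S^2$ and $Y=X-2\nabla\log S$, using \eqref{2.16.1} to write $\Delta_{X-2\nabla\log S}|T|^2=\Delta_X|T|^2+\frac{2}{S}g(\nabla S,\nabla|T|^2)$ and, after expanding $\Delta_X S^2=2S\Delta_X S+2|\nabla S|^2$, $\Delta_{X-2\nabla\log S}S^2=2S\Delta_X S+6|\nabla S|^2$. After collecting terms the ``cross'' contribution $-\frac{4}{S^3}g(\nabla|T|^2,\nabla S)$ coming from the last piece of \eqref{2.16} partially cancels the term $\frac{2}{S^3}g(\nabla S,\nabla|T|^2)$ produced by the shift, yielding
\[
\tfrac{1}{2}\Delta_{X-2\nabla\log S}\!\left(\tfrac{|T|^2}{S^2}\right)=\tfrac{1}{2S^2}\Delta_X|T|^2-\tfrac{|T|^2}{S^3}\Delta_X S-\tfrac{1}{S^3}g(\nabla S,\nabla|T|^2)+\tfrac{|T|^2|\nabla S|^2}{S^4}.
\]

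\smallskip

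\emph{Step 2: Inserting the soliton identities.} Substitute \eqref{eq3.2} and \eqref{eq3.3}, and use $|\ricc|^2=\frac{S^2}{m}+|T|^2$. The $\lambda$-dependent terms cancel, the curvature-linear pieces combine as
\[
-\tfrac{2(m-2)|T|^2}{m(m-1)S}+\tfrac{2|T|^2|\ricc|^2}{S^3}=\tfrac{2|T|^2}{m(m-1)S}+\tfrac{2|T|^4}{S^3},
\]
and one is left with
\[
\tfrac{|\nabla T|^2}{S^2}+\tfrac{2|T|^2}{m(m-1)S}+\tfrac{2|T|^4}{S^3}+\tfrac{4\operatorname{tr}(t^3)}{(m-2)S^2}-\tfrac{2T_{ki}T_{sj}W_{ksij}}{S^2}-\tfrac{1}{S^3}g(\nabla S,\nabla|T|^2)+\tfrac{|T|^2|\nabla S|^2}{S^4}.
\]

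\smallskip

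\emph{Step 3: Algebraic estimates on $T$ and $W$.} Now apply the Okumura-type inequality for a traceless symmetric operator,
\[
\operatorname{tr}(t^3)\geq -\tfrac{m-2}{\sqrt{m(m-1)}}|T|^3,
\]
and the Huisken-type estimate
\[
\bigl|T_{ki}T_{sj}W_{ksij}\bigr|\leq \tfrac{1}{2}\sqrt{\tfrac{2(m-2)}{m-1}}\,|W|\,|T|^2.
\]
The first three non-Weyl curvature terms then assemble into a perfect square:
\[
\tfrac{2|T|^4}{S^3}-\tfrac{4|T|^3}{\sqrt{m(m-1)}S^2}+\tfrac{2|T|^2}{m(m-1)S}=\tfrac{2|T|^2}{S^3}\!\left(|T|-\tfrac{S}{\sqrt{m(m-1)}}\right)^{\!2}.
\]

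\smallskip

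\emph{Step 4: The gradient square.} Expand
\[
\tfrac{1}{S^3}\!\left(\tfrac{|T|}{\sqrt S}|\nabla S|-\sqrt S\,|\nabla T|\right)^{\!2}=\tfrac{|T|^2|\nabla S|^2}{S^4}-\tfrac{2|T||\nabla S||\nabla T|}{S^3}+\tfrac{|\nabla T|^2}{S^2},
\]
so that it suffices to show $-\tfrac{1}{S^3}g(\nabla S,\nabla|T|^2)\geq -\tfrac{2|T||\nabla S||\nabla T|}{S^3}$. This follows at once from Cauchy--Schwarz and Kato's inequality $|\nabla|T|^2|\leq 2|T||\nabla T|$. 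Combining Steps 3 and 4 gives exactly \eqref{eq3.20}.

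\smallskip

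\emph{Main obstacle.} The computation is long but mechanical; the only non-routine ingredients are the sharp pointwise inequalities for $\operatorname{tr}(t^3)$ and $T_{ki}T_{sj}W_{ksij}$, whose constants must be exactly the ones stated above in order for the completion of the square with coefficient $\tfrac{1}{\sqrt{m(m-1)}}$ to work. Keeping careful track of the many $S$-powers and signs during the substitution of \eqref{eq3.2} and \eqref{eq3.3} is where a careless slip would derail the identification of both squares on the right-hand side of \eqref{eq3.20}.
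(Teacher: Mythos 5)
Your proposal is correct and follows essentially the same route as the paper: expand $\Delta_{X-2\nabla\log S}(|T|^2/S^2)$ via the quotient and shift rules, insert \eqref{eq3.2}--\eqref{eq3.3}, and then apply Okumura's inequality, Huisken's estimate on $T_{ki}T_{sj}W_{ksij}$, and Cauchy--Schwarz with $|\nabla|T|^2|\le 2|T||\nabla T|$ to assemble the two squares; all intermediate coefficients agree with the paper's decomposition into the terms $A$, $B$, $C$.
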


\begin{proof}
 We use equations \eqref{2.16}, \eqref{eq3.2} and
\eqref{eq3.3} and
\begin{equation*}
|\ricc|^2=|T|^2+\frac{S^2}{m}
\end{equation*}
to compute
\begin{equation}\label{eq3.22}
\Delta_{X-2\nabla\log S}\left(\frac{|T|^2}{S^2}\right)=A+B+C,
\end{equation}
where
\begin{eqnarray*}
A&=&\frac{2}{S^3}\left(S|\nabla T|^2+\frac{1}{S}|T|^2|\nabla S|^2-\langle\nabla|T|^2,\nabla S\rangle\right),\\
B&=&\frac{8}{m-2}\,\frac{1}{S^2}\text{tr}(t^3)+\frac{4}{m(m-1)}\,\frac{|T|^2}{S}+4\frac{|T|^4}{S^3},\\
C&=&-\frac{4}{S^2}T_{ik}T_{sj}W_{ksij}.
\end{eqnarray*}
Next we use Cauchy inequality and $$|\nabla|T|^2|\leq2|\nabla
T||T|$$ to obtain
\begin{equation}\label{eq3.23}
A\geq\frac{2}{S^3}\left(\frac{|T|}{\sqrt{S}}|\nabla
S|-\sqrt{S}|\nabla T|\right)^2.
\end{equation}
Since $T$ is trace free, by Okumura's lemma, \cite{Oku},
\begin{equation*}
\text{tr}(t^3)\geq-\frac{m-2}{\sqrt{m(m-1)}}|T|^3
\end{equation*}
with equality holding if and only if either $|T|=0$ or $|T|=\frac{1}{\sqrt{m(m-1)}}S$. Therefore
\begin{equation}\label{eq3.25}
B\geq4\frac{|T|^2}{S^3}\left(|T|-\frac{1}{\sqrt{m(m-1)}}S\right)^2.
\end{equation}
Finally, by Huisken's inequality, \cite[Lemma 2.4]{Hui},
\begin{equation}\label{eq3.26}
C\geq-\frac{2\sqrt2}{S^2}\sqrt{\frac{m-2}{m-1}}|W||T|^2.
\end{equation}
Inequality \eqref{eq3.20} now follows immediately by putting together
\eqref{eq3.22}, \eqref{eq3.23}, \eqref{eq3.25} and \eqref{eq3.26}.
\end{proof}

\

\section{Some analytic results}\label{sec3}

Let $(M, g)$ be an $m$--dimensional Riemannian
manifold, $X$ a vector field and $T$ a semipositive definite,
symmetric $(0, 2)$-tensor on $M$. We define the operator $L=L_{T, X}$ on $M$
acting on  $u\in Lip_\text{loc}(M)$ by
\begin{equation}\label{eq2.1}
Lu\,=\,L_{T, X}u\,=\,\diver\big(T(\nabla
u,\,)^\sharp\big)-g\pa{X,\nabla u},
\end{equation}
where of course the above has to be understood in the weak sense.
We introduce the following
\begin{defi}\label{def2.3}
i) We say that $M$ is \emph{$L$--parabolic} if each bounded above,
$L$--subharmonic function is constant, that is,  each $u\in Lip_\text{loc}(M)$  with
$u^*=\displaystyle\sup_{M}u<+\infty$ and satisfying
\begin{equation*}
Lu\geq0\quad\text{on }M
\end{equation*}
is constant.

ii) We say that $M$ is \emph{strongly $L$--parabolic} if for each
non--constant $u\in Lip_\text{loc}(M)$  with $u^*<\infty$ and for each $\eta>0$
\begin{equation}\label{eq2.5}
\inf_{\Omega_\eta}Lu<0,
\end{equation}
where
\begin{equation*}
\Omega_\eta=\{x\in M\,:\, u(x)>u^*-\eta\}.
\end{equation*}
\end{defi}

Of course, again, \eqref{eq2.5} has to be interpreted in the weak sense, that is, for some $\eps>0$ there exists $\varphi\in Lip_0\pa{\Omega_\eta}$, $\varphi\geq 0$, $\varphi\not\equiv 0$, such that
\begin{equation*}
  -\pa{\int_{\Omega_\eta}T\pa{\nabla u, \nabla \varphi}+g\pa{X, \nabla u}\varphi} \leq -\int_{\Omega_\eta}\eps\varphi.
\end{equation*}

It is clear that strong parabolicity implies parabolicity, and the converse
can be shown to be true whenever the function obtained as the
maximum between a constant and a $L$--subharmonic function is still
$L$--subharmonic. This is the case for the operators in
\eqref{eq2.1}, as we are going to prove by adapting
an idea of Agmon, \cite{Agm}.

\begin{proposition}\label{prop2.6.1}
Let $L$ be the operator defined in\eqref{eq2.1}; let $u\in Lip_\text{loc}(M)$  satisfy
\begin{equation}\label{eq2.6.2}
Lu\geq0\qquad \text{on }M
\end{equation}
and let $\alpha\in\erre$ be any constant. Then the function defined
by
$$
w(x)=\max\{u(x),\alpha\}
$$
satisfies \eqref{eq2.6.2}.
\end{proposition}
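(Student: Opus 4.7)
The plan is to adapt Agmon's idea to the divergence-form operator $L=L_{T,X}$ by approximating the truncation $w=\max\{u,\alpha\}$ via a one-parameter family of smooth convex approximations of the positive part. The fact that $T$ is semipositive definite will be the key structural ingredient, and will produce a nonnegative correction term that is simply discarded to obtain the subsolution inequality for $w$.

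\textbf{Step 1 (Convex approximation of the positive part).} For $\eps>0$, I would pick a sequence $\eta_\eps\in C^2(\erre)$ of convex, nondecreasing functions such that $\eta_\eps(t)=0$ for $t\leq 0$, $\eta_\eps(t)\ra t_+$ uniformly on $\erre$, $0\leq\eta_\eps'\leq 1$ with $\eta_\eps'(t)\ra \chi_{\{t>0\}}$ pointwise except at $t=0$, and $\eta_\eps''\geq 0$. Set $w_\eps=\alpha+\eta_\eps(u-\alpha)$, which is Lipschitz and converges uniformly to $w=\max\{u,\alpha\}$. Since $u\in Lip_{\text{loc}}(M)$, the chain rule gives $\nabla w_\eps=\eta_\eps'(u-\alpha)\nabla u$ a.e., and by Stampacchia's lemma $\nabla w=\chi_{\{u>\alpha\}}\nabla u$ a.e., so $\nabla w_\eps \ra \nabla w$ a.e., dominated by $|\nabla u|$.

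\textbf{Step 2 (Testing the subsolution inequality).} Fix $\varphi\in Lip_0(M)$ with $\varphi\geq 0$, and test \eqref{eq2.6.2} against the admissible (Lipschitz, compactly supported, nonnegative) function $\phi_\eps = \varphi\,\eta_\eps'(u-\alpha)$. Since $\nabla \phi_\eps=\eta_\eps'(u-\alpha)\nabla\varphi+\varphi\,\eta_\eps''(u-\alpha)\nabla u$, I get
\begin{equation*}
0\leq -\int_M T\bigl(\nabla u,\nabla\phi_\eps\bigr) - \int_M g(X,\nabla u)\phi_\eps = I_\eps + II_\eps + III_\eps,
\end{equation*}
where
\begin{equation*}
I_\eps = -\int_M \eta_\eps'(u-\alpha)\,T(\nabla u,\nabla\varphi),\quad II_\eps = -\int_M \varphi\,\eta_\eps''(u-\alpha)\,T(\nabla u,\nabla u),
\end{equation*}
\begin{equation*}
III_\eps = -\int_M g(X,\nabla u)\,\varphi\,\eta_\eps'(u-\alpha).
\end{equation*}
The crucial observation is that $II_\eps\leq 0$: indeed $\varphi\geq 0$, $\eta_\eps''\geq 0$, and $T$ is semipositive definite, so $T(\nabla u,\nabla u)\geq 0$. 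Dropping $II_\eps$ to the left-hand side therefore yields $I_\eps+III_\eps\geq 0$.

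\textbf{Step 3 (Passage to the limit).} As $\eps\downarrow 0$, $\eta_\eps'(u-\alpha)\ra \chi_{\{u>\alpha\}}$ almost everywhere (the set $\{u=\alpha\}$ has $\nabla u=0$ a.e., so its contribution vanishes). Since $|\eta_\eps'|\leq 1$ and the integrands $T(\nabla u,\nabla\varphi)$ and $g(X,\nabla u)\varphi$ are in $L^1(\supp\varphi)$, dominated convergence gives
\begin{equation*}
I_\eps\longrightarrow -\int_{\{u>\alpha\}}T(\nabla u,\nabla\varphi) = -\int_M T(\nabla w,\nabla\varphi),
\end{equation*}
and analogously $III_\eps \to -\int_M g(X,\nabla w)\varphi$, using $\nabla w = \chi_{\{u>\alpha\}}\nabla u$ a.e. Combining,
\begin{equation*}
-\int_M T(\nabla w,\nabla\varphi) - \int_M g(X,\nabla w)\varphi \geq 0,
\end{equation*}
which is exactly $Lw\geq 0$ in the weak sense.

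\textbf{Expected main difficulty.} The substantive point is Step 2: one must be sure that $\varphi\eta_\eps'(u-\alpha)$ is a legitimate test function for the weak formulation (Lipschitz with compact support, which it is since $u$ is locally Lipschitz and $\eta_\eps'$ is bounded $C^1$), and that the positive contribution $II_\eps$ comes out with the correct sign. The sign relies simultaneously on the convexity of $\eta_\eps$ and on the semipositive definiteness of $T$, both of which are built into the hypotheses on $L$. Step 3 is then routine dominated convergence, provided one invokes Stampacchia's lemma to identify $\nabla w$ almost everywhere and to dispose of the set $\{u=\alpha\}$.
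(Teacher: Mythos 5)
Your proof is correct and is essentially the paper's own argument: after reducing to $\alpha=0$, the paper implements exactly this Agmon-type truncation with the specific convex regularization $\eta_\eps(t)=\tfrac{1}{2}\left(t+\sqrt{t^2+\eps^2}\right)$ of $t_+$, testing with $\varphi_\eps=\eta_\eps'(u)\varphi$ and discarding the term $-\frac{\varphi}{u_\eps^3}(u_\eps^2-u^2)T(\nabla u,\nabla u)$, which is precisely your $II_\eps$ with the same sign argument from the semipositivity of $T$. The only cosmetic difference is that the paper passes to the limit via Fatou's lemma rather than dominated convergence plus Stampacchia's lemma.
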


\begin{proof}
 Since $u+\beta$ is still a solution of
\eqref{eq2.6.2} for every $\beta\in\erre$, without loss of
generality we can suppose that $\alpha=0$. In this case $w=u^+$, so that it remains to show that $u^+$ is a
solution of \eqref{eq2.6.2}. Towards this end we recall that \eqref{eq2.6.2}
means
\begin{equation*}
\int_M T(\nabla u,\nabla\varphi)+g\pa{X,\nabla u}\varphi\leq0
\end{equation*}
for every $\varphi\in \text{Lip}_0(M)$ such that $\varphi\geq0$. We
fix such a $\varphi$  and
a constant $\varepsilon>0$. We set
$$u_\varepsilon=\sqrt{u^2+\varepsilon^2},\qquad\varphi_\varepsilon=\frac{1}{2}\left(1+\frac{u}{u_\varepsilon}\right)\varphi$$
and note that $\varphi_\varepsilon$ is still an admissible test
function for \eqref{eq2.6.2}. Furthermore, $u_\varepsilon>|u|$ and
\begin{equation}\label{4}
u_\varepsilon\rightarrow|u|,\qquad\nabla
u_\varepsilon\rightarrow\nabla|u|,\qquad\varphi_\varepsilon\rightarrow\pa{\frac{1+\text{sgn}(u)}{2}}\varphi\qquad\qquad\text{
as }\varepsilon\rightarrow0^+.
\end{equation}
 A
simple computation shows that
$$
T(\nabla u_\varepsilon,\nabla\varphi)=T\left(\nabla
u,\nabla\left(\frac{u}{u_\varepsilon}\varphi\right)\right)-\frac{\varphi}{u_\varepsilon^3}(u_\varepsilon^2-u^2)T(\nabla
u,\nabla u)
$$
so that, since $T$ is, \emph{a fortiori},  semipositive definite, we obtain
$$T(\nabla u_\varepsilon,\nabla\varphi)\leq T\left(\nabla u,\nabla\left(\frac{u}{u_\varepsilon}\varphi\right)\right).$$
From this inequality it immediately follows
\begin{equation}\label{eq2.6.4}
T\left(\nabla\left(\frac{u+u_\varepsilon}{2}\right),\nabla\varphi\right)\leq
T(\nabla u,\nabla\varphi_\varepsilon).
\end{equation}
Now by the definition of subsolution
$$
\int_M T(\nabla u,\nabla\varphi_\varepsilon)+g\pa{X,\nabla
u}\varphi_\varepsilon\leq0
$$
and therefore, using
\eqref{eq2.6.4},
$$
\int_M
T\left(\nabla\left(\frac{u+u_\varepsilon}{2}\right),\nabla\varphi\right)+g\pa{X,\nabla
u}\varphi_\varepsilon\leq0.
$$
Letting
$\varepsilon\downarrow0^+$ we deduce by \eqref{4} and by Fatou's lemma
that
$$
\int_M T(\nabla u^+,\nabla\varphi)+g\pa{X,\nabla
u^+}\varphi\leq0,
$$
 that is, $u^+$ is a subsolution of
\eqref{eq2.6.2}.
\end{proof}

The following is a sufficient condition for the validity of strong $L$-parabolicity.
\begin{theorem}\label{thm3.3}
Let $(M, g)$ be a connected Riemannian manifold and
$L$ be as in \eqref{eq2.1}. Let $\gamma\in Lip_{loc}(M)$ satisfy
\begin{eqnarray}
\label{eq2.7}&&\gamma(x)\rightarrow+\infty\text{ as
}x\rightarrow\infty\\
\label{eq2.8}&&L\gamma<0\text{ outside a compact set.}
\end{eqnarray}
Then $M$ is $L$--parabolic.
\end{theorem}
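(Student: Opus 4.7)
The plan is a Khasminskii-type barrier argument exploiting $\gamma$ as an exhaustion function on which $L$ acts negatively, combined with the weak maximum principle on precompact $\gamma$-sublevel sets and the Agmon device of Proposition \ref{prop2.6.1} to close the argument.

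Let $u\in\operatorname{Lip}_{\mathrm{loc}}(M)$ satisfy $Lu\geq 0$ weakly on $M$ and $u^*:=\sup_M u<+\infty$; the goal is to prove that $u$ is constant. Adding a constant, I may assume $\gamma\geq 0$. For each $\alpha>0$ introduce the barrier-perturbed function $u_\alpha:=u-\alpha\gamma$. By \eqref{eq2.7}, $u_\alpha\to-\infty$ at infinity, and by \eqref{eq2.8},
\[
Lu_\alpha \;=\; Lu-\alpha L\gamma
\]
is weakly nonnegative on $M$, with strict inequality on $M\setminus K$. Setting $D_R:=\{\gamma<R\}$, for $R$ larger than $R_0:=\sup_K\gamma$ the set $D_R$ is precompact and contains $K$. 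The weak maximum principle for divergence-form subsolutions on the annular domain $D_R\setminus K$ (obtained by testing \eqref{eq2.6.2} for $u_\alpha$ against $(u_\alpha-c)^+$ with $c>\sup_{\partial(D_R\setminus K)}u_\alpha$ and using the semidefiniteness of $T$) then yields
\[
\sup_{D_R\setminus K} u_\alpha \;\leq\; \max\bigl\{\sup_{\partial K} u-\alpha\inf_{\partial K}\gamma,\;u^*-\alpha R\bigr\}.
\]
For $R$ large the first term on the right dominates; letting $R\to+\infty$ and then $\alpha\to 0^+$ I conclude that $u(x)\leq\sup_{\partial K} u$ for every $x\in M\setminus K$, whence $u^*=\sup_K u$ is attained on the compact set $K$.

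To upgrade the conclusion ``supremum attained on $K$'' to $u\equiv u^*$, I invoke Proposition \ref{prop2.6.1}: for each $c<u^*$ the truncation $\tilde u_c:=\max(u,c)$ is again a bounded $L$-subsolution with $\tilde u_c^*=u^*$, and repeating the barrier argument on $\tilde u_c$ forces $\{u>c\}\cap K\neq\emptyset$ for every $c<u^*$; combining this with the same estimate applied to an exhausting sequence of compacts $K'\supseteq K$ (on whose complement \eqref{eq2.8} trivially persists), and exploiting the connectedness of $M$, should yield $u\equiv u^*$. I expect the delicate step to be precisely this final propagation, since $T$ being only semi-definite precludes a direct appeal to the strong maximum principle; it is exactly here that Proposition \ref{prop2.6.1} becomes indispensable, as its Agmon-style truncation lemma allows the Khasminskii estimate to be iterated on the family $\tilde u_c$ as $c\nearrow u^*$ and combined with $\gamma$-based Lipschitz cut-offs in the weak formulation to close the argument, thereby also identifying $L$-parabolicity with strong $L$-parabolicity as in the discussion preceding the theorem.
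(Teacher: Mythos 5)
Your overall strategy (perturb $u$ by $\alpha\gamma$, estimate on the exhaustion $\{\gamma<R\}$, let $R\to\infty$ and $\alpha\to 0$) is a reasonable Khasminskii-type plan, but it has two genuine gaps. The first is the maximum principle step. Testing $Lu_\alpha\geq 0$ on $D_R\setminus K$ against $\varphi=(u_\alpha-c)^+$ gives
\begin{equation*}
\int_{\{u_\alpha>c\}}T\pa{\nabla u_\alpha,\nabla u_\alpha}\;\leq\;-\int_{\{u_\alpha>c\}}g\pa{X,\nabla u_\alpha}\,(u_\alpha-c),
\end{equation*}
and the right-hand side has no sign: the semidefiniteness of $T$ gives no coercivity with which to absorb the drift term, so the boundary estimate $\sup_{D_R\setminus K}u_\alpha\leq\max\{\sup_{\partial K}u_\alpha,\sup_{\partial D_R}u_\alpha\}$ does not follow from the test you describe. (When $T$ is allowed to degenerate this is not a technicality; for $T\equiv 0$ the operator is pure drift and no such maximum principle holds.) The paper's proof does not use a bare weak maximum principle for $L$-subsolutions at all: it builds the explicit strict supersolution $\gamma_\sigma=\alpha+\sigma(\gamma-T_1)$, which satisfies $L\gamma_\sigma=\sigma L\gamma<0\leq Lu$, locates a compact superlevel set of $u-\gamma_\sigma$ inside $\Omega_\eta$, and invokes the comparison principle of Pucci--Rigoli--Serrin (Theorem 5.3 of that reference) on a connected component of $\{u-\gamma_\sigma>\beta\}$, where $u$ and $\gamma_\sigma+\beta$ agree on the boundary. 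It is the strictness of $L\gamma_\sigma<0$, combined with that cited comparison theorem, that replaces the estimate you are asserting.

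The second gap is the final passage from ``$u^*$ is attained on the compact set $K$'' to ``$u\equiv u^*$'', which you yourself flag as delicate and leave as ``should yield''. Proposition \ref{prop2.6.1} cannot close this: it only shows that $\max(u,\alpha)$ is again a subsolution (in the paper it serves solely to identify parabolicity with strong parabolicity, and is not used in the proof of Theorem \ref{thm3.3}), and iterating your barrier estimate on the truncations $\max(u,c)$ only reproduces the statement $\sup_K u=u^*$. What is actually needed is a strong maximum principle for the degenerate operator $L$, and, contrary to your remark that semidefiniteness of $T$ precludes such an appeal, the paper uses exactly this: its first step shows via Theorem 5.6 of Pucci--Rigoli--Serrin that if $u^*$ were attained then $u$ would be constant on a connected component of $\Omega_\eta$ and hence, by connectedness, on $M$; the rest of the argument then derives a contradiction from $u^*$ not being attained. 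Note also that the paper proves the stronger property of strong $L$-parabolicity (it only assumes $Lu\geq 0$ on a superlevel set $\Omega_\eta$), whereas your argument uses $Lu\geq 0$ on all of $M$; this is admissible for the statement as given, but the two missing ingredients above are precisely the nontrivial analytic content of the theorem and must be supplied, not assumed.
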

%

\begin{proof} We reason by contradiction
and we assume the existence of a non--constant $u\in Lip_{loc}(M)$, with
$u^*<+\infty$ and of $\eta>0$ such that
\begin{equation}\label{eq2.9}
Lu\geq0\qquad\text{on }\Omega_\eta.
\end{equation}
First we observe that $u^*$ cannot be attained at any point $x_0\in
M$, for otherwise $x_0\in \Omega_\eta$ and by the strong maximum
principle for the operator $L$ given in Theorem 5.6 of \cite{PucRigSer} we have that $u$ is constantly equal to $u^*$ on the connected component of
$\Omega_\eta$ containing $x_0$. From this and connectedness it follows easily that
$u$ is constant on $M$, a contradiction.

Next for $t\in\erre$ we let
\begin{equation*}
\Lambda_t=\{x\in M\,:\,\gamma(x)>t\}
\end{equation*}
and we define
\begin{equation*}
u^*_t=\sup_{x\in\Lambda_t^c}u(x).
\end{equation*}
Using \eqref{eq2.7} it is not hard to see that $\Lambda^c_t$ is
compact, and therefore $u^*_t$ is attained at some point of
$\Lambda^c_t$. Since $u^*$ is not attained in $M$ and
$\{\Lambda_t^c\}$ is a nested family of compact sets exhausting $M$,
there exists a diverging sequence
$\{t_j\}_{j\in\enne}\subset\erre^+$ such that
\begin{equation}\label{eq2.12}
u^*_{t_j}\nearrow u^*\quad\text{as }j\rightarrow+\infty,
\end{equation}
and we can choose $T_1>0$ sufficiently large so that
\begin{equation*}
u^*_{T_1}>u^*-\frac{\eta}{2}.
\end{equation*}
Without loss of generality we can also suppose to have chosen $T_1$
large enough so that also \eqref{eq2.8} holds on $\Lambda_{T_1}$.
Now fix $\alpha$ satisfying $u^*_{T_1}<\alpha<u^*$. Because of
\eqref{eq2.12} we can find $j\in\enne$ sufficiently large so that
\begin{equation*}
T_2=t_j>T_1\qquad\text{and}\qquad u^*_{T_2}>\alpha.
\end{equation*}
We select $\bar\eta>0$ small enough to satisfy
\begin{equation}\label{eq2.15}
\alpha+\bar\eta<u^*_{T_2}.
\end{equation}
For $\sigma>0$ we define
\begin{equation*}
\gamma_\sigma(x)=\alpha+\sigma\big(\gamma(x)-T_1\big).
\end{equation*}
Then
\begin{equation}\label{eq2.17}
L\gamma_\sigma=\sigma L\gamma<0\qquad\text{on }\Lambda_{T_1}.
\end{equation}
Next we observe that
\begin{equation*}
\alpha\leq\gamma_\sigma(x)\leq\alpha+\sigma(T_2-T_1)\qquad\text{on
}\Lambda_{T_1}\setminus\Lambda_{T_2}
\end{equation*}
and therefore we can choose $\sigma>0$ sufficiently small to guarantee
\begin{equation}\label{eq2.19}
\sigma(T_2-T_1)<\bar\eta
\end{equation}
and then
\begin{equation*}
\alpha\leq\gamma_\sigma(x)<\alpha+\bar\eta\qquad\text{on
}\Lambda_{T_1}\setminus\Lambda_{T_2}.
\end{equation*}
On $\partial\Lambda_{T_1}$ we have
\begin{equation*}
\gamma_\sigma(x)=\alpha>u^*_{T_1}\geq u(x)
\end{equation*}
so that
\begin{equation*}
u(x)-\gamma_\sigma(x)<0\qquad\text{and }\partial\Lambda_{T_1}.
\end{equation*}
Furthermore, if $\bar{x}\in\Lambda_{T_1}\setminus\Lambda_{T_2}$ is
such that $$u(\bar{x})=u^*_{T_2}>\alpha+\bar{\eta}$$ then
$$u(\bar{x})-\gamma_\sigma(\bar{x})\geq u^*_{T_2}-\alpha-\sigma(T_2-T_1)>u^*_{T_2}-\alpha-\bar\eta>0$$
because of \eqref{eq2.15} and \eqref{eq2.19}. Finally \eqref{eq2.7}
and the fact that $u^*<+\infty$ imply
\begin{equation}\label{eq2.23}
(u-\gamma_\sigma)(x)<0\qquad\text{on }\Lambda_{T_3}
\end{equation}
for $T_3>T_2$ sufficiently large. Hence
$$\mu=\sup_{x\in\bar\Lambda_{T_1}}(u-\gamma_\sigma)(x)>0$$
and it is in fact a positive maximum attained at some point $z_0$ in
the compact set $\bar\Lambda_{T_1}\setminus\Lambda_{T_3}$.
Thus
\[
\Sigma = \set{x\in\Lambda_{T_1} : (u-\gamma_\sigma)(x)=\mu} \neq \emptyset.
\]
Furthermore, for $y \in \Sigma$,
\[
u(y)=\gamma_\sigma(y)+\mu>\gamma_\sigma(y)=\alpha+\sigma\pa{\gamma(y)-T_1}>\alpha>u^*_{T_1}>u^*-\frac{\eta}{2},
\]
so that
\[
\Sigma \subset \Omega_\eta,
\]
and by \eqref{eq2.23} $\Sigma \subset \bar\Lambda_{T_1}\setminus \Lambda_{T_3}$, therefore $\Sigma$ is compact. Hence there exists an open neighbourhood of $\Sigma$, $\Sigma_U\subset\Omega_\eta$. Fix $y \in \Sigma$ and $\beta \in \pa{0, \mu}$ and call $\Sigma_{\beta, y}$ the connected component of the set
\[
\set{x \in \Lambda_{T_1} : (u-\gamma_\sigma)(x)>\beta}
\]
containing $y$. We can choose $\beta$ sufficiently close to $\mu$ so that $\overline{\Sigma}_{\beta, y}\subset \Omega_\eta\cap\Lambda_{T_1}$. Note that, since $\beta>0$, $\overline{\Sigma}_{\beta, y}$ is compact. Because of \eqref{eq2.17} and \eqref{eq2.9}
\[
Lu \geq 0 \geq \sigma L\gamma = L\gamma_\sigma \quad \text{ on } \Sigma_{\beta, y}
\]
in the weak sense. Furthermore, $u(x)=\gamma_\sigma(x) +\beta$ on $\partial\Sigma_{\beta, y}$. By Theorem 5.3 of \cite{PucRigSer}, $u(x)\leq\gamma_\sigma(x) +\beta$ on $\overline{\Sigma}_{\beta, y}$. However $y\in \Sigma_{\beta, y}$ and we have
\[
u(y) = \gamma_\sigma(y)+\mu >  \gamma_\sigma(y)+\beta
\]
by our choice of $\beta$. Contradiction.
\end{proof}

We remark that for \eqref{eq2.23} to hold it is enough to require
\begin{equation}\label{eq2.24.1}
u(x)=o\big(\gamma(x)\big)\qquad\text{as }x\rightarrow\infty.
\end{equation}
Hence a careful reading of the above proof yields the validity of
the following Liouville--type result.
\begin{theorem}\label{thm2.24.2}
Let $(M, g)$ be a connected Riemannian manifold and
$L$ as in \eqref{eq2.1}. Let $\gamma\in Lip_{loc}(M)$ satisfy
\eqref{eq2.7} and \eqref{eq2.8}. If $u\in Lip_{loc}(M)$ satisfies
\eqref{eq2.24.1} and $Lu\geq0$ on $M$ then $u$ is constant.
\end{theorem}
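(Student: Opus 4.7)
The plan is to argue by contradiction and mimic the proof of Theorem \ref{thm3.3} almost verbatim, replacing the hypothesis $u^* = \sup_M u < +\infty$ (which was used there only to secure \eqref{eq2.23}) with the growth condition $u(x) = o(\gamma(x))$, which yields the same conclusion. Assume $u$ is non-constant; since $Lu \geq 0$ on all of $M$, the strong maximum principle of Theorem 5.6 in \cite{PucRigSer} prevents $u$ from attaining its supremum $u^* \in \erre \cup \set{+\infty}$.

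Set up exactly as in the proof of Theorem \ref{thm3.3}: put $\Lambda_t = \set{\gamma > t}$, whose complement is compact by \eqref{eq2.7}, and set $u^*_t = \sup_{\Lambda_t^c} u$, which is finite, attained, and converges monotonically to $u^*$; the non-attainment of $u^*$ forces $u^*_t < u^*$ strictly for every $t$. Next pick $T_1$ large enough so that $L\gamma < 0$ on $\Lambda_{T_1}$, choose $\alpha \in (u^*_{T_1}, u^*)$, then $T_2 > T_1$ with $u^*_{T_2} > \alpha$, a small $\bar\eta > 0$ with $\alpha + \bar\eta < u^*_{T_2}$, and finally $\sigma > 0$ so small that $\sigma(T_2-T_1) < \bar\eta$. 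Setting $\gamma_\sigma = \alpha + \sigma(\gamma - T_1)$, these choices reproduce the three geometric features of Theorem \ref{thm3.3}: $L\gamma_\sigma < 0$ on $\Lambda_{T_1}$; $u - \gamma_\sigma < 0$ on $\partial\Lambda_{T_1}$; and the existence of a point $\bar x \in \Lambda_{T_1}\setminus\Lambda_{T_2}$ at which $(u-\gamma_\sigma)(\bar x) > 0$.

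The only step where $u^* < +\infty$ was truly essential in Theorem \ref{thm3.3} is the conclusion \eqref{eq2.23}, and this is precisely where $u = o(\gamma)$ enters: writing $(u-\gamma_\sigma)(x) = \pa{u(x) - \sigma\gamma(x)} + \sigma T_1 - \alpha$ and using $u = o(\gamma)$ together with $\gamma \to +\infty$ gives $u - \sigma\gamma \to -\infty$, so $u - \gamma_\sigma < 0$ on $\Lambda_{T_3}$ for $T_3$ sufficiently large. From this point onward the argument is identical to the final part of the proof of Theorem \ref{thm3.3}: the positive maximum $\mu = \sup_{\bar\Lambda_{T_1}}(u - \gamma_\sigma)$ is attained on the compact set $\bar\Lambda_{T_1}\setminus\Lambda_{T_3}$, and on a sufficiently narrow connected super-level component $\Sigma_{\beta, y} \subset \Lambda_{T_1}$ one has $Lu \geq 0 \geq L\gamma_\sigma$ inside with $u = \gamma_\sigma + \beta$ on the boundary, so the comparison principle (Theorem 5.3 of \cite{PucRigSer}) yields $u \leq \gamma_\sigma + \beta$ on $\overline{\Sigma}_{\beta,y}$, which clashes with $u(y) = \gamma_\sigma(y) + \mu > \gamma_\sigma(y) + \beta$.

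The main subtlety worth verifying is the case $u^* = +\infty$: each $u^*_t$ remains finite as a supremum over a compact set, the monotone divergence $u^*_t \nearrow +\infty$ provides arbitrarily large admissible values for $\alpha$ and $T_2$, and the interval $(u^*_{T_1}, u^*)$ is simply $(u^*_{T_1}, +\infty)$, so the parameter selection still goes through. Once this is noted, the proof of Theorem \ref{thm2.24.2} is just the proof of Theorem \ref{thm3.3} with $u = o(\gamma)$ substituted for $u^* < +\infty$ at the single place where that boundedness was needed.
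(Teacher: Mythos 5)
Your proposal is correct and follows exactly the route the paper intends: the paper's own ``proof'' of Theorem \ref{thm2.24.2} is precisely the remark that the hypothesis $u^*<+\infty$ enters the proof of Theorem \ref{thm3.3} only to secure \eqref{eq2.23}, and that $u=o(\gamma)$ serves the same purpose there. Your explicit check that the parameter selection ($u^*_t$ finite and increasing, the choice of $\alpha$, $T_2$, $\bar\eta$, $\sigma$) survives when $u^*=+\infty$ is a worthwhile detail that the paper leaves implicit, but it does not change the argument.
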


\section{Proof of Theorems \ref{thmB} and \ref{thmC}}

In this short section we prove Theorem \ref{thmB} and Theorem \ref{thmC}. First of all, from Theorem \ref{thmscal} we know that the soliton is either  flat or it has positive scalar curvature $S>0$. Moreover, from the growth estimates on the vector field $X$ proved in \cite[Remark 2.2]{naber}, we know that $|X|\rightarrow \infty$ as $r \rightarrow \infty$ (we notice that, under the assumptions of theorems \ref{thmB} and \ref{thmC}, one has that the metric has bounded Ricci curvature). 

In dimension three, every complete shrinking soliton has nonnegative sectional curvature \cite{chen}. Moreover, by Hamilton's strong maximum principle, either $g$ has strictly positive sectional curvature or it splits a line. In this latter case, either the soliton is flat or it is isometric to a quotient of the round cylinder $\mathbb{R}\times \mathbb{S}^{2}$. So from now on, in dimension three, we can assume that the metric has strictly positive sectional curvature. In particular it holds $|\ricc|^{2}<\frac{1}{2} S^{2}$. Moreover, the pinching condition \eqref{1.4} is automatically satisfied, since the W.eyl tensor vanishes in three dimension. Thus, it is sufficient to prove Theorem \ref{thmC}, with $m\geq 3$,  to conclude. Now, the proof follows the arguments in \cite{cat}. Under the assumptions of Theorem \ref{thmC}, Corollary \ref{CO2.18.1} applies. Hence, from Lemma \ref{lemma} and Theorem \ref{thm3.3}, we have that $\frac{|T|^{2}}{S^{2}}$ must be constant on $M$. Moreover, from the proof of Lemma \ref{lemma}, we get that $(M, g)$ is either Einstein or satisfies the identity $|T| = \frac{1}{\sqrt{m(m-1)}}S$. Now, if $m=3$,  this violates the fact that the metric has positive sectional curvature. So $(M, g)$  is Einstein, hence it has constant positive sectional curvature and is a finite quotient of $\mathbb{S}^{3}$. On the other hand, if $m\geq 4$, the pinching assumption \eqref{1.4} on the Weyl curvature implies that $(M, g)$ is either Einstein or has zero Weyl tensor. In the first case, since the metric has positive scalar curvature, we have that $M$ is compact. Moreover, from the pinching condition \eqref{1.4}, we get that
$$
|W|^{2}\,\leq\, \frac{2}{m^{2}(m-1)(m-2)} \,S^{2}\,\leq\, \frac{4}{m(m-1)(m-2)(m+1)}\,S^{2}\,.
$$
Thus, the pinching condition in Huisken \cite[Corallary 2.5]{Hui} is satisfied which implies
that $(M, g)$ has positive curvature. Hence, from a classical theorem of Tachibana \cite{tachib1}, we conclude that $(M, g)$ has constant positive sectional curvature and is a finite quotient of $\mathbb{S}^{m}$. On the other hand, if the Weyl tensor vanishes, from the classification of locally conformally flat shrinking Ricci solitons given in \cite{catmantmazz} we obtain that if $(M, g)$ is non-flat and noncompact, then it must be a finite quotient of $\mathbb{R}\times\mathbb{S}^{m-1}$.

\medskip

This concludes the proof of Theorem \ref{thmB} and Theorem \ref{thmC}.

\

\section{Further remarks}\label{sec4}
In this final section we collect some further observations on generic Ricci solitons. We begin with the following

\begin{lemma}
Let $Y$ be a vector field on $(M, g)$, $S$ the
scalar curvature, $T=\ricc-\frac{S}{m}g$ the
traceless Ricci tensor with corresponding endomorphism
$t:TM\rightarrow TM$ and $l_Yg:TM\rightarrow TM$
the endomorphism corresponding to
$\mathcal{L}_Yg$, that is, for every vector field
$Z$
\begin{equation*}
(l_Yg)(Z)=(\mathcal{L}_Yg(Z))^\sharp.
\end{equation*}
We set $W_Y=T(Y,\,)^\sharp=t(Y)$. Then
\begin{equation}\label{eq3.30}
\diver W_Y=\frac{1}{2}\text{tr}(l_Yg\circ
t)+\frac{m-2}{2m}Y(S).
\end{equation}
\end{lemma}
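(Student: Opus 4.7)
The plan is to prove the identity by a direct computation in a local orthonormal frame, using the contracted second Bianchi identity and the symmetry of the traceless Ricci tensor. Throughout I use the index conventions already fixed in the paper, namely $1\leq i,j,\ldots\leq m$ and summation over repeated indices.

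First, in components $(W_Y)_j = T_{jk}Y^k$, so by the Leibniz rule
\begin{equation*}
\diver W_Y = \nabla_j\pa{T_{jk}Y^k} = \pa{\nabla_j T_{jk}}Y^k + T_{jk}\nabla_j Y^k.
\end{equation*}
For the first summand I invoke the twice-contracted second Bianchi identity $\nabla_j R_{jk} = \tfrac{1}{2}\nabla_k S$ (which is \eqref{eq3.9} written in components). Since $T_{jk}=R_{jk}-\tfrac{S}{m}\delta_{jk}$, this yields
\begin{equation*}
\nabla_j T_{jk} = \frac{1}{2}\nabla_k S - \frac{1}{m}\nabla_k S = \frac{m-2}{2m}\nabla_k S,
\end{equation*}
so that $\pa{\nabla_j T_{jk}}Y^k = \frac{m-2}{2m}Y(S)$, which is precisely the second term on the right-hand side of \eqref{eq3.30}.

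For the second summand I exploit the symmetry $T_{jk}=T_{kj}$ to symmetrize $\nabla_j Y^k$:
\begin{equation*}
T_{jk}\nabla_j Y^k = \frac{1}{2}T_{jk}\pa{\nabla_j Y^k + \nabla_k Y^j} = \frac{1}{2}T_{jk}\pa{\mathcal{L}_Y g}_{jk}.
\end{equation*}
Finally I reinterpret the last expression as a trace of a composition of endomorphisms: since $t$ has matrix $(T_{jk})$ and $l_Yg$ has matrix $\pa{(\mathcal{L}_Y g)_{jk}}$ in any orthonormal frame, the contraction $T_{jk}(\mathcal{L}_Y g)_{jk}$ equals $\text{tr}\pa{l_Y g \circ t}$. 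Putting the two pieces together gives exactly \eqref{eq3.30}.

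There is no serious obstacle in this argument; it is a bookkeeping computation. The only point requiring a moment of care is the correct use of the second Bianchi identity to produce the coefficient $\frac{m-2}{2m}$, and the recognition that the symmetric pairing of the two symmetric $(0,2)$-tensors $T$ and $\mathcal{L}_Y g$ coincides with the trace of the composition of their associated $(1,1)$-tensors $t$ and $l_Y g$.
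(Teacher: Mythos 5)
Your proof is correct and follows essentially the same route as the paper's: expand $\diver W_Y$ by the Leibniz rule, use the twice-contracted second Bianchi identity (which the paper calls Schur's identity, $2R_{ik,i}=S_k$) to handle $\nabla_j T_{jk}$, and symmetrize $\nabla_j Y^k$ against the symmetric tensor $T$ to recognize $\tfrac{1}{2}\operatorname{tr}(l_Yg\circ t)$. No discrepancies.
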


\begin{proof}
 We give the short proof for completeness. In the
moving frame notation $W_Y$ is the vector field of components
$$W_j=(W_Y)_j=Y_iT_{ij}.$$ Thus
$$\diver W_Y=Y_{ik}T_{ik}+Y_iT_{ik,k}.$$ Using the fact that $T$
is symmetric and Schur's identity $$2R_{ik,i}=S_k$$ we have
\begin{equation*}
\diver W_Y\,=\,\frac{1}{2}(Y_{ik}+Y_{ki})T_{ik}+Y_i(R_{ik,k}-\frac{S_i}{m})\,=\,\frac{1}{2}\text{tr}(l_Yg\circ
t)+\frac{m-2}{2m}S_iY_i,
\end{equation*}
that is, \eqref{eq3.30}.
\end{proof}
\vspace{0,4cm}

As a consequence we obtain

\begin{proposition}\label{prop3.3.1}
Let $(M, g, X)$ be a generic Ricci soliton with scalar
curvature $S$ and traceless Ricci tensor $T$. Then
\begin{equation}\label{eq3.32}
\diver (T(X,\,)^\sharp)=\frac{m-2}{2m}X(S)-|T|^2.
\end{equation}
\end{proposition}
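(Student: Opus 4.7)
The strategy is to specialize the previous lemma to $Y=X$ and exploit the soliton equation to evaluate the trace term $\frac{1}{2}\operatorname{tr}(l_Xg\circ t)$. Since the identity \eqref{eq3.30} already accounts for the $\frac{m-2}{2m}X(S)$ summand in \eqref{eq3.32}, the whole task reduces to showing that
\[
\tfrac{1}{2}\operatorname{tr}(l_Xg\circ t)=-|T|^2.
\]

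To compute this, I would rewrite the soliton equation \eqref{1} as $\tfrac{1}{2}\mathcal{L}_Xg=\lambda g-\ricc$, which at the level of endomorphisms reads $\tfrac{1}{2}l_Xg=\lambda\,\operatorname{id}-r$, where $r:TM\to TM$ is the Ricci endomorphism. Since $\ricc=T+\frac{S}{m}g$, the corresponding endomorphism decomposes as $r=t+\frac{S}{m}\operatorname{id}$. Composing with $t$ and taking the trace gives
\[
\tfrac{1}{2}\operatorname{tr}(l_Xg\circ t)=\lambda\operatorname{tr}(t)-\operatorname{tr}(r\circ t)=\lambda\operatorname{tr}(t)-\operatorname{tr}(t^2)-\tfrac{S}{m}\operatorname{tr}(t).
\]
Using that $T$ is traceless (so $\operatorname{tr}(t)=0$) and that $\operatorname{tr}(t^2)=|T|^2$, the right-hand side collapses to $-|T|^2$, as required.

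Finally, plugging $Y=X$ into \eqref{eq3.30} yields
\[
\diver\bigl(T(X,\,)^\sharp\bigr)=\diver W_X=\tfrac{1}{2}\operatorname{tr}(l_Xg\circ t)+\tfrac{m-2}{2m}X(S)=\tfrac{m-2}{2m}X(S)-|T|^2,
\]
which is \eqref{eq3.32}. There is no real obstacle here: the only subtle point is bookkeeping, namely keeping track of the splitting $\ricc=T+\frac{S}{m}g$ at the endomorphism level and observing that the trace $\operatorname{tr}(l_Xg)$ (which would pick up the $\lambda\,\operatorname{id}$ part) is killed by the factor $t$ since $T$ is traceless.
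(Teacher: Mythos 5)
Your argument is correct and matches the paper's proof, which likewise substitutes the soliton equation and the decomposition $\ricc=T+\frac{S}{m}g$ into \eqref{eq3.30} and uses $\operatorname{tr}(t)=0$ together with $\operatorname{tr}(t^2)=|T|^2=|\ricc|^2-\frac{S^2}{m}$ to reduce the trace term to $-|T|^2$. The paper only sketches this computation in one line; your write-up simply makes the same bookkeeping explicit.
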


\begin{proof}
 Use the soliton equation \eqref{1}, the definition
of $T$ and $|T|^2=|\ricc|^2-\frac{S^2}{m}$ into \eqref{eq3.30}
to obtain \eqref{eq3.32}.
\end{proof}

\begin{rem}
Note that \eqref{eq3.32} could be interpreted as a kind of
infinitesimal ``Kazdan--Warner'' condition for generic Ricci solitons.
\end{rem}

The following result is an immediate consequence of Proposition
\ref{prop3.3.1}.

\begin{theorem}\label{nameless}
Let $(M, g, X)$ be a complete, generic Ricci soliton
with constant scalar curvature and dimension $m\geq3$. Let $T$ be
the trace free Ricci tensor and assume that for $p,q$ conjugate
exponents
\begin{equation}\label{eq4.2}
|X|\in L^p(M),\qquad|T|\in L^q(M).
\end{equation}
Then $(M, g)$ is Einstein, $X$ is either Killing or
homothetic (but not Killing). In this latter case
$(M, g, X)$ is not steady and
$(M, g)$ is locally Euclidean.
\end{theorem}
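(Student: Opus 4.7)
The strategy is to derive a pointwise divergence identity from the constant-scalar-curvature hypothesis and integrate it against a compactly supported cutoff, using the product integrability $|X|\,|T| \in L^1(M)$ coming from Hölder's inequality. First, since $S$ is constant, $X(S) = 0$, so Proposition \ref{prop3.3.1} collapses to
\[
\diver\bigl(T(X,\,)^\sharp\bigr) \,=\, -|T|^2 \quad \text{on } M.
\]
Setting $V = T(X,\,)^\sharp$, Cauchy--Schwarz yields $|V| \leq |T|\,|X|$, and Hölder's inequality applied with the conjugate exponents $p, q$ of \eqref{eq4.2} gives $|V| \in L^1(M)$.

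The heart of the argument is a Karp-type cutoff estimate on the complete manifold $(M, g)$. Choose a family $\{\varphi_R\}_{R>0}$ of Lipschitz cutoffs with $\varphi_R \equiv 1$ on $B_R(o)$, $\supp \varphi_R \subset B_{2R}(o)$, $|\nabla \varphi_R| \leq 2/R$ and $\varphi_R$ pointwise non-decreasing in $R$. Since $\varphi_R V$ is compactly supported, integration of $\diver(\varphi_R V)=\varphi_R\diver V + g(\nabla\varphi_R, V)$ yields
\[
\int_M \varphi_R |T|^2 \,=\, \int_M g(\nabla \varphi_R, V) \,\leq\, \frac{2}{R}\int_{B_{2R}\setminus B_R} |V|,
\]
and the right-hand side tends to $0$ as $R\to +\infty$ because $|V|\in L^1(M)$. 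Monotone convergence on the left then forces $\int_M|T|^2 = 0$, so $T\equiv 0$ and $(M,g)$ is Einstein.

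With $\ricc = (S/m)g$, the soliton equation \eqref{1} becomes $\mathcal{L}_X g = 2(\lambda - S/m)g$, so $X$ is Killing (when $\lambda = S/m$) or properly homothetic (when $\lambda \neq S/m$). In the latter case the plan is to invoke the classical rigidity result for complete Riemannian manifolds carrying a non-Killing infinitesimal homothety (Tashiro--Kobayashi--Yano), which forces $(M,g)$ to be flat, hence locally Euclidean. Then $S = 0$, so $\lambda - S/m = \lambda \neq 0$ and the soliton is not steady.

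The delicate passage is the Stokes-type step: one does not need $\diver V$ to be a priori integrable, only that it has a definite sign, and this together with $|V|\in L^1$ is precisely what drives the monotone convergence. The other nontrivial ingredient is the cited rigidity theorem for proper infinitesimal homotheties; all remaining steps reduce to direct computation from Proposition \ref{prop3.3.1} and equation \eqref{1}.
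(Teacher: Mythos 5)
Your proposal is correct and follows essentially the same route as the paper: both reduce to $\diver\bigl(T(X,\,)^\sharp\bigr)=-|T|^2$ via Proposition \ref{prop3.3.1}, use H\"older's inequality and the sign of the divergence to conclude $\int_M|T|^2=0$, and then invoke Tashiro's rigidity theorem for a non-Killing homothetic field. The only difference is cosmetic: where the paper cites Karp's version of the divergence theorem, you prove the needed special case directly with the annulus cutoff estimate, which is a perfectly valid (and self-contained) substitute.
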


\begin{proof}
 Since $S$ is constant, \eqref{eq3.32} becomes
$$
\diver(T(X,\,)^\sharp)=-|T|^2\leq0.$$
Furthermore, because of
\eqref{eq4.2} the vector field $T(X,\,)^\sharp\in L^1(M)$. We apply
Karp's  version of the divergence theorem \cite{karp} to deduce
$\int_M|T|^2=0$, that is, $(M, g)$ is Einstein.
From the soliton equation \eqref{1}
$$\frac{1}{2}\mathcal{L}_Xg=\left(\lambda-\frac{S}{m}\right)g.$$
If $X$ is not Killing $\lambda-\frac{S}{m}\neq0$, and by Theorem 4.1
in \cite{Tas}, $(M, g)$ is locally Euclidean so
that $S\equiv0$ and $\lambda\neq0$.
\end{proof}

\bibliographystyle{plain}

\bibliography{BiblioGeneric_CMMR}

\end{document}